\newcommand{\tikzcircle}[2][black, fill=blue, opacity=.2]{\tikz[baseline=-0.5ex]\draw[#1,radius=#2] (0,0) circle ;}%
\newtheorem{thm}{Theorem}[section]
\newtheorem{lemma}[thm]{Lemma}
\newtheorem{cor}[thm]{Corollary}
\newtheorem{defn}[thm]{Definition}
\newtheorem{conj}[thm]{Conjecture}
\newtheorem{problem}[thm]{Problem}
\newtheorem{example}[thm]{Example}
\newtheorem{remark}[thm]{Remark}
\newtheoremstyle{definition}
{3pt} 
{3pt} 
{} 
{} 
{\bfseries} 
{.} 
{.5em} 
{} 
\newcommand{\Comment}[1]{{\color{blue} \sf ($\clubsuit$ #1 $\clubsuit$)}}
\newcommand{\CommentO}[1]{{\color{red} \sf ($\clubsuit$ #1 $\clubsuit$)}}
\DeclareMathOperator{\wt}{wt}
\DeclareMathOperator{\Tab}{Tab}
\DeclareMathOperator{\row}{row}
\DeclareMathOperator{\Mat}{Mat}
\DeclareMathOperator{\hMat}{{Mat}}
\DeclareMathOperator{\tr}{tr}
\DeclareMathOperator{\arr}{arr}
\DeclareMathOperator{\States}{States}
\DeclareMathOperator{\2ASEP}{ASEP}
\DeclareMathOperator{\Pos}{Pos}
\DeclareMathOperator{\dis}{dis}
\DeclareMathOperator{\dist}{dis}
\DeclareMathOperator{\west}{row}
\DeclareMathOperator{\Sym}{Sym}
\DeclareMathOperator{\EA}{\Pos_{12}}
\DeclareMathOperator{\EE}{\Pos_2}
\DeclareMathOperator{\CRT}{CRT}
\DeclareMathOperator{\cyc}{cyc}
\DeclareMathOperator{\rec}{rec}
\newcommand{\Z}{\mathbb Z}
\newcommand{\tsigma}{\tilde{\sigma}}
\title[Cylindric rhombic tableaux and the ASEP on a ring]{Cylindric rhombic tableaux and the two-species ASEP on a ring}
\date{\today}
\author{Sylvie Corteel}
\address{Laboratoire d'Informatique Algorithmique: Fondements et Applications,
Centre National de la Recherche Scientifique et Universit\'e Paris Diderot,
Paris 7, Case 7014, 75205 Paris Cedex 13
France}
\email{corteel@liafa.univ-paris-diderot.fr}
\author{Olya Mandelshtam}
\address{Department of Mathematics,
Brown University, Providence, RI}
\email{olya@math.brown.edu}
\author{Lauren Williams}
\address{Department of Mathematics,
Harvard University}
\email{williams@math.harvard.edu}
\thanks{SC was partially funded by the ``Combinatoire \`a Paris" projet
Emergences 2013--2017 and by ``ALEA Sorbonne" projet IDEX USPC. OM was partially supported by NSF grant DMS-1704874. LW was partially supported by NSF grant DMS-1600447.}
\begin{document}
\keywords{asymmetric exclusion process, Macdonald polynomials}


\begin{abstract}
The asymmetric simple  exclusion process (ASEP) is a model of 
particles hopping on a one-dimensional lattice of $n$ sites. It was introduced
around 1970 \cite{bio, Spitzer}, and since then has been extensively studied
 by researchers in statistical mechanics, probability, and combinatorics.
Recently the ASEP on a lattice with open boundaries has been linked to 
Koornwinder polynomials \cite{CW-Koornwinder, Cantini}, and the ASEP on a ring 
has been linked to Macdonald polynomials \cite{CGW}.
In this article we study 
the two-species asymmetric simple exclusion process (ASEP) on a ring,
in which  
two kinds of particles (``heavy'' and ``light''), as well as ``holes,'' can 
	hop both clockwise  and counterclockwise  (at rates $1$ or $t$
	depending on the particle types)
on a ring of $n$ sites. 
We introduce some new tableaux on a cylinder called
\emph{cylindric rhombic tableaux} (CRT), and use them to give 
a formula for the stationary distribution of the two-species ASEP -- each 
probability is expressed as a sum over all CRT of a fixed type.
When $\lambda$ is a partition in $\{0,1,2\}^n$, we then give a formula for the nonsymmetric Macdonald polynomial
$E_{\lambda}$ and the symmetric Macdonald polynomial $P_{\lambda}$ by 
refining our tableaux formulas for the stationary distribution.  
\end{abstract}

\maketitle
\setcounter{tocdepth}{1}
\tableofcontents

\section{Introduction}

Introduced around 1970 \cite{bio, Spitzer},
the asymmetric simple exclusion process (ASEP) 
is a model of interacting 
particles hopping on a one-dimensional lattice of $n$ sites. 
It has been extensively studied 
by researchers in statistical mechanics \cite{DEHP, USW}, 
probability \cite{Liggett, Liggett2, FerrariMartin, BorodinCorwin}, 
and combinatorics 
\cite{jumping, Angel, BE, CW1, CW-Duke1, CMW}. 
Recently the ASEP on a lattice with open boundaries has been linked to
Koornwinder polynomials \cite{CW-Koornwinder, Cantini}, and the ASEP on a ring
has been linked to Macdonald polynomials \cite{CGW}.  
In particular, it was shown in 
\cite{CGW} that when $q=1$ and $x_i=1$ for all $i$, the Macdonald polynomial $P_{\lambda}$ is the 
\emph{partition function} for the multispecies ASEP on a ring.

In this article we study
the two-species asymmetric simple exclusion process (ASEP) on a ring,
in which
two kinds of particles (``heavy'' and ``light'') hop on a lattice of $n$
sites arranged in a ring.  Two adjacent particles, or a particle and a hole, 
can switch places at a rate $t$ or $1$, depending on their relative weights.
We introduce some new tableaux on a cylinder called
\emph{cylindric rhombic tableaux} (CRT), and use them to give
a formula for the stationary distribution of the ASEP -- each
probability is expressed as a sum over the weights of all CRT of a fixed type, where the weight of each CRT is a series.
When $\lambda$ is a partition in $\{0,1,2\}^n$, we then give a formula for the nonsymmetric Macdonald polynomial
$E_{\lambda}$ and the symmetric Macdonald polynomial $P_{\lambda}$ by
refining our tableaux formulas for the stationary distribution.

When $t=0$, the asymmetric simple exclusion process is called the \emph{totally asymmetric simple exclusion 
process} or TASEP.  
Ferrari and Martin \cite{FerrariMartin} studied the multispecies ($k$-species) TASEP on a ring 
and gave combinatorial formulas for the stationary distribution
 in terms of \emph{multiline queues}; they viewed the $k$-TASEP
on a ring as a projection of 
a Markov process on multiline queues, which can be viewed as a coupled system of $k$ single species TASEPs. 
This work was recently generalized by Martin \cite{Martin} to the case of ASEP (i.e. $t$ is general).
Matrix product formulas were found for the probabilities of the TASEP using probabilistic methods in \cite{EvansFerrariMallick08} and generalized to the ASEP case in \cite{ProlhacEvansMallick09} with an explicit construction in \cite{AritaAyyerMallickProlhac12}. From the statistical mechanics side, other formulas for the $k$-TASEP were found by interpreting the Ferrari-Martin process as a combinatorial $R$ matrix in \cite{KMO15}. 
The inhomogeneous multispecies TASEP was also studied in \cite{AyyerLinusson14}, with a graphical construction that generalized the Ferrari-Martin algorithm for the 2-TASEP, and a general conjecture for the $k$-TASEP which was proved using a generalized Matrix ansatz in \cite{AritaMallick12}. 

 Multiline queues have been used to study many aspects of the ASEP \cite{AyyerLinusson14, AasLinusson18}; in this case we give a bijection between
CRT and multiline queues, which is related to recent work by the second author \cite{Man17}. Also note that Haglund-Haiman-Loehr have a tableaux formula for both symmetric \cite{HHL1} and nonsymmetric Macdonald polynomials \cite{HHL2} using \emph{nonattacking fillings}; we explain the 
relation between nonattacking fillings and multiline queues in \cite{CMW-MLQ}
(they are in bijection when the partition has distinct parts; but in general there are more nonattacking
fillings than multiline queues).

\begin{remark}
	In some sense the results of this paper are subsumed by the results of \cite{CMW-MLQ}, 
	in that the latter has combinatorial formulas that work for Macdonald polynomials
	associated to arbitrary partitions (not just $\lambda \in \{0,1,2\}^n$).  However, 
	since these cylindric rhombic tableaux are significantly different than multiline queues,
	and our methods of proof use the Matrix Ansatz rather than the Hecke algebra,
	we thought that this paper might be of independent interest.
\end{remark}

\section{The ASEP on a ring}

We now define the two-species asymmetric simple exclusion process (ASEP) on a ring.

\begin{defn}\label{2-ASEP}
Let $k$, $r$, and $\ell$ be nonnegative integers which sum to $n$,
	and let $t$ be a constant such that $0 \leq t \leq 1$.
	Let $\States(k,r,\ell)$ 
	be the set of all  words of length $n$ in $\{0,1,2\}^n$
	consisting of $k$ $0$'s, $r$ $1$'s, and $\ell$ $2$'s. 
We consider indices modulo $n$; i.e. 
if $\mu=\mu_1\ldots \mu_n \in \{0,1,2\}^n$, then $\mu_{n+1}=\mu_1$.
The \emph{two-species asymmetric simple exclusion process} 
	$\2ASEP(k,r,\ell)$ on a ring
	is the Markov chain on $\States(k,r,\ell)$
	with transition probabilities $P_{\mu,\nu}$ between states $\mu,\nu\in\States(k,r,\ell)$:
\begin{itemize}
\item If $\mu = A i j B$ and $\nu = A j i B$, where 
$A$ and $B$ are words in $\{0,1,2\}^*$ and $i > j$ are letters
in $\{0,1,2\}$, then 
$P_{\mu,\nu} = \frac{t}{n}$ and $P_{\nu,\mu} = \frac{1}{n}$.
\item Otherwise, $P_{\mu,\nu} = 0$ for $\nu \neq \mu$ and 
$P_{\mu,\mu} = 1-\sum_{\mu \neq \nu} P_{\mu,\nu}$.
\end{itemize}

	\begin{figure}[!ht]
  \centerline{\includegraphics[height=1.5in]{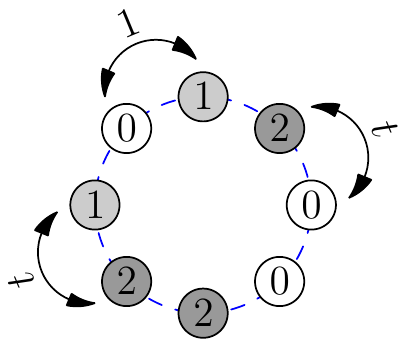}}
\centering
 \caption{The two-species ASEP on a lattice with $8$ sites. There are 
		three holes ($0$'s), two light particles ($1$'s), and 
		three heavy particles ($2$'s), 
		 so we refer
		to this Markov chain as $\2ASEP(3,2,3)$.}\label{parameters}
 \end{figure}

We think of the $1$'s and $2$'s as representing two types of 
particles (``light'' and ``heavy'') which can occupy the sites;
each $0$ denotes an empty site.



\end{defn}


The following Matrix Ansatz \cite{Ansatz1} (see also \cite{Ansatz2})
is a useful tool for computing 
these probabilities in terms of the trace of a certain matrix product.

\begin{thm}[Matrix Ansatz]\label{ansatz}\cite[Section 8]{Ansatz1}
Suppose that  $A_0$, $A_1$, and $A_2$ are matrices (typically infinite)
that satisfy the following relations:
\begin{equation}\label{eq:MA}
A_0A_2=tA_2A_0+(1-t)(A_0+A_2), \qquad A_0A_1=tA_1A_0+(1-t)A_1, \qquad A_1A_2=tA_2A_1+(1-t)A_1.
\end{equation}
Given $\mu\in\States(k,r,\ell)$, we let $\Mat(\mu)$ denote the product of 
matrices obtained from $\mu$ by substituting $A_0$ for $0$, $A_1$ for $1$,
and $A_2$ for $2$.
Then in the $\2ASEP(k,r,\ell)$, 
the steady state probability $\Pr(\mu)$ of state $\mu$ 
is given by 
\[
\Pr(\mu)=\frac{1}{Z_{k,r,\ell}}\tr(\Mat(\mu)),
\]
where $Z_{k,r,\ell}$ is the \emph{partition function} defined  by $[x^ky^rz^{\ell}]\tr((xA_0+yA_1+zA_2)^{k+r+\ell})$.
\end{thm}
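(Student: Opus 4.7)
The plan is to verify directly that $\Pr(\mu):=\tr(\Mat(\mu))/Z_{k,r,\ell}$ satisfies the flux-balance identity
\[
\sum_{\nu\neq\mu}\bigl[\Pr(\nu)P_{\nu,\mu}-\Pr(\mu)P_{\mu,\nu}\bigr]=0 \qquad \text{for every } \mu\in\States(k,r,\ell),
\]
which is equivalent to stationarity for the Markov chain of \Cref{2-ASEP}. Since every nonzero off-diagonal transition swaps two cyclically adjacent letters of $\mu$, the left-hand side decomposes as a sum $\sum_{k=1}^n F_k$ of contributions, one per bond. For each $k$, using cyclicity of the trace to rotate $\Mat(\mu)$ to start at position $k$, the contribution $F_k$ depends only on $(\mu_k,\mu_{k+1})$ and on the cyclic ``rest'' $R^{(k)}:=A_{\mu_{k+2}}\cdots A_{\mu_{k-1}}$.

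The key device is to introduce scalar auxiliary matrices $\hat A_0=(1-t)I$, $\hat A_1=0$, $\hat A_2=-(1-t)I$. A direct check against~\eqref{eq:MA} shows the bilinear identity
\[
A_i A_j - t A_j A_i \;=\; \hat A_i A_j - A_i \hat A_j \qquad \text{for all } i<j,
\]
and by antisymmetry the same relation (with a sign) controls the case $i>j$. Substituting this into the two subcases $\mu_k>\mu_{k+1}$ and $\mu_k<\mu_{k+1}$, both collapse to the same expression
\[
F_k \;=\; -\frac{1}{nZ_{k,r,\ell}}\,\tr\bigl(\bigl[\hat A_{\mu_k}A_{\mu_{k+1}}-A_{\mu_k}\hat A_{\mu_{k+1}}\bigr]\,R^{(k)}\bigr),
\]
with $F_k=0$ automatically when $\mu_k=\mu_{k+1}$.

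It then remains to sum $F_k$ over $k$. Splitting into two pieces, I obtain $\sum_k\tr(\hat A_{\mu_k}A_{\mu_{k+1}}R^{(k)})$ and $\sum_k\tr(A_{\mu_k}\hat A_{\mu_{k+1}}R^{(k)})$. Using cyclicity of the trace I rewrite each summand of the second sum as $\tr(\hat A_{\mu_{k+1}}A_{\mu_{k+2}}\cdots A_{\mu_{k}})$, i.e.\ as the $(k{+}1)$-st term of the first sum. After reindexing $k\mapsto k+1\pmod n$, the two sums are identical, so $\sum_k F_k=0$ and stationarity holds. The formula for $Z_{k,r,\ell}$ then follows by summing $\tr(\Mat(\mu))$ over $\mu\in\States(k,r,\ell)$ and extracting the coefficient of $x^ky^rz^\ell$ in $\tr\bigl((xA_0+yA_1+zA_2)^n\bigr)$.

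The step I anticipate being trickiest to articulate cleanly is the wrap-around bond $k=n$: the two letters swapped there sit at opposite ends of the linear product $A_{\mu_1}\cdots A_{\mu_n}$ and are not literally adjacent. Viewing $\tr(\Mat(\mu))=\tr(A_{\mu_k}A_{\mu_{k+1}}\cdots A_{\mu_{k-1}})$ as a cyclic product treats all $n$ bonds on the same footing, and it is precisely this trace-cyclicity that allows the telescoping to close up to $0$; a naive matrix-element version without the trace would leave an uncancelled boundary term, which is why the Matrix Ansatz for the ring has to be formulated in terms of $\tr(\Mat(\mu))$ rather than a vector sandwich $\langle W|\Mat(\mu)|V\rangle$.
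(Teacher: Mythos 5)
The paper gives no proof of this theorem: it is quoted from the cited reference (\cite{Ansatz1}, Section 8), so there is no internal argument to compare yours against. Judged on its own, your proof is correct, and it is the standard ``hat-matrix'' telescoping argument for matrix-product stationary states on a ring, specialized to the observation that scalar hats suffice here. I verified the key identity: with $\hat A_0=(1-t)I$, $\hat A_1=0$, $\hat A_2=-(1-t)I$, the three relations of \eqref{eq:MA} are precisely $A_iA_j-tA_jA_i=\hat A_iA_j-A_i\hat A_j$ for $(i,j)\in\{(0,1),(0,2),(1,2)\}$; since the $\hat A$'s are scalar, the right-hand side is antisymmetric under $i\leftrightarrow j$, which is what collapses the two cases $\mu_k>\mu_{k+1}$ and $\mu_k<\mu_{k+1}$ to a single formula for $F_k$ and makes bonds with $\mu_k=\mu_{k+1}$ contribute zero. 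The telescoping also checks out: because $\hat A_{\mu_k}$ is scalar, $\tr\bigl(\hat A_{\mu_k}A_{\mu_{k+1}}R^{(k)}\bigr)$ equals $\hat a_{\mu_k}$ times the trace of the cyclic product with the $k$-th factor deleted, and the second sum reindexes onto the first, so $\sum_k F_k=0$. Two caveats deserve a sentence in a polished write-up: for infinite matrices one must assume the relevant traces converge (for the matrices of \cref{defn:matrices} they are geometric series, convergent for $|qt^r|<1$); and your computation produces a stationary \emph{measure}, so to identify $\tr(\Mat(\mu))/Z_{k,r,\ell}$ with \emph{the} stationary probability one also needs irreducibility of the chain on $\States(k,r,\ell)$ and $Z_{k,r,\ell}\neq 0$, both routine. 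Your closing observation --- that trace-cyclicity is what closes the telescoping on the ring, whereas an open chain would leave boundary terms to be absorbed by vectors $\langle W|$ and $|V\rangle$ --- is exactly the right way to see why the ring ansatz is formulated with a trace.
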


\section{Probabilities for the two-species ASEP using cylindric rhombic tableaux}

In this section we define some new combinatorial objects that we call 
\emph{cylindric rhombic tableaux} (or CRT),
and then in \cref{main_result}
we use them to give combinatorial formulas for the 
steady state probabilities of the ASEP.
The proof of our formulas uses the Matrix Ansatz.
Our combinatorial objects will be fillings of certain diagrams composed of squares
and rhombi.  The squares have two horizontal and two vertical edges, while the rhombi have two 
vertical edges as well as two diagonal edges (of slope $1)$, see 
 \cref{Xstrips}.  
The fact that states of the $\2ASEP$ are words in 
$\{0,1,2\}^*$ is related to the fact that there are three types
of lines making up the sides of a square or rhombus: vertical, diagonal, and horizontal.

\begin{defn}
A \emph{(generalized) row} in a CRT is a connected strip of squares and rhombi
which are adjacent along their vertical edges, 
see the left diagram 
in \cref{Xstrips}.   
A \emph{square column} is a connected strip of squares, 
which are adjacent along their horizontal edges; 
and a \emph{rhombic column} is a 
connected strip of rhombi, which are adjacent along their diagonal edges.
\end{defn}

\begin{defn}
Given $\mu\in \{0,1,2\}^*$, we define $\mu|_{12}$ to be the subword of $\mu$ 
	consisting of 1's and 2's. An 
\emph{$\mu$-strip} is a generalized row composed of adjacent squares and rhombi which is obtained by reading $\mu|_{12}$ and appending a square for each 2 and a rhombus for each 
1 to the left of the row; 
see the left diagram 
in \cref{Xstrips}.  
\end{defn} 

\begin{defn}\label{X-path}
Given $\mu\in \{0,1,2\}^*$, we define the \emph{$\mu$-path} $P(\mu)$ to be
the lattice path consisting of south, southwest, and west steps 
obtained by reading $\mu$ and mapping a 0 to a south step, a 1 to a southwest step, and a 2 to a west step; see the bold path at the right of 
\cref{Xstrips}.  
\end{defn}

\begin{figure}[!ht]
  \centerline{\includegraphics[height=1.5in]{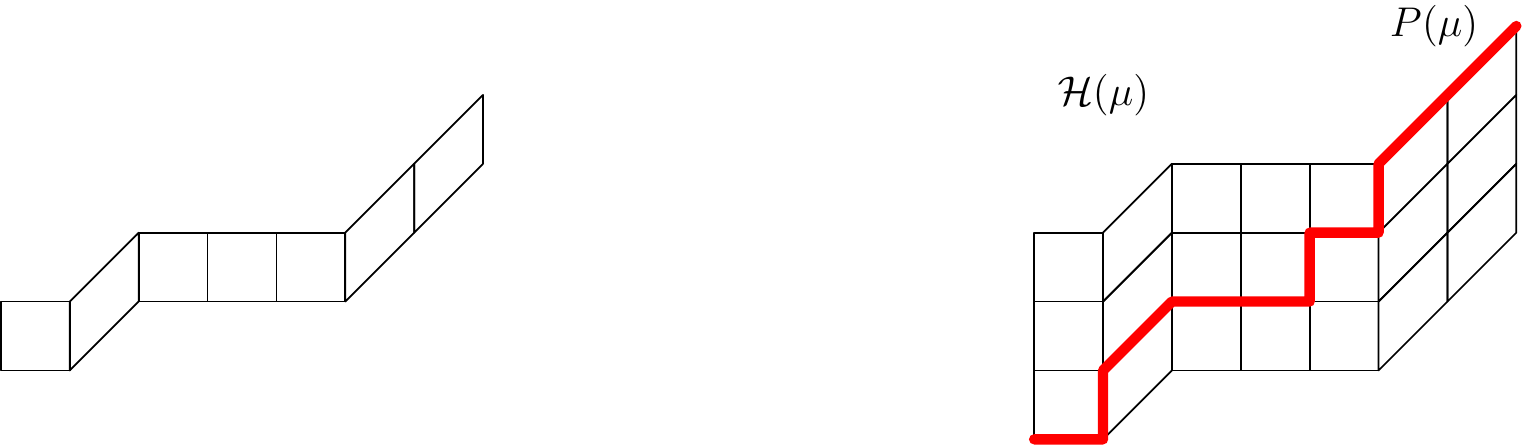}}
\centering
 \caption{For $\mu=1102022102$, the $\mu$-strip is shown at the left, while 
$\mathcal{H}(\mu)$ is shown at the right, with the path $P(\mu)$ superimposed
in bold.}\label{Xstrips}
 \end{figure}

\begin{defn}
Let $\mu\in\States(k,r,\ell)$. Define the \emph{$\mu$-diagram} $\mathcal{H}(\mu)$ to 
be the shape consisting of $k$ $\mu$-strips stacked on top of each other, 
together with the path $P(\mu)$ superimposed onto the shape so that it connects 
the northeast and southwest corners.  
(If $k=0$ then $\mathcal{H}(\mu)$ is defined to be just the path $P(\mu)$.)
See the  diagram at the right in \cref{Xstrips}.
  We  identify the two vertical edges on either end of each 
row; in this way we view the shape on a  cylinder.
Thus the rightmost tile is adjacent to the leftmost tile in each row.
\end{defn}

Note that for $\mu\in\States(k,r,\ell)$, $\mathcal{H}(\mu)$ has  $k$ rows,
$r$ rhombic columns, and 
$\ell$ square columns.
For example in \cref{Xstrips}, $\mathcal{H}(\mu)$
has $3$ rows, $3$ rhombic columns, and $4$ square columns.
For $m\in[k]$, let $\west(m)$ denote the $m$'th row, numbered
 from bottom to top.

\begin{defn}[Cylindric rhombic tableau and arrow ordering]\label{def:CRT}
Choose a word $\mu\in\States(k,r,\ell)$. 
A \emph{cylindric rhombic tableau} (CRT) $T$  of type $\mu$ is a placement of up-arrows into 
the square tiles of the diagram  $\mathcal{H}(\mu)$ so that there is \emph{at most} one up-arrow in each column. 
(We allow columns to be empty.)
We denote the set of cylindric rhombic tableaux
of type $\mu$ by $\CRT(\mu)$.

An \emph{arrow ordering} of $T$ is a labeling of the arrows in each row by 
the numbers $1,\dots,i$, where $i$ is the number of arrows in that row.
	Let $\arr(T)$ denote the total number of arrows in $T$.
We let $\sigma^i$ denote the labeling of the arrows in $\west(i)$, and let
$\{\sigma^i\} = (\sigma^1,\dots,\sigma^k)$.
\end{defn}

For an example, see \cref{fig:CRT_example}.

\begin{defn}\label{def:free}
We say an arrow in tile $s$ is \emph{pointing at} a tile $s'$ if they are in the same column and $s$ is below $s'$ when we read from bottom to top.
We call a square tile \emph{free} if the tile is empty and 
	there is no arrow pointing to it.  (Note that freeness does not depend on the 
	path $P(\mu)$.)
\end{defn}

\begin{figure}[!ht]
  \centerline{\includegraphics[width=2in]{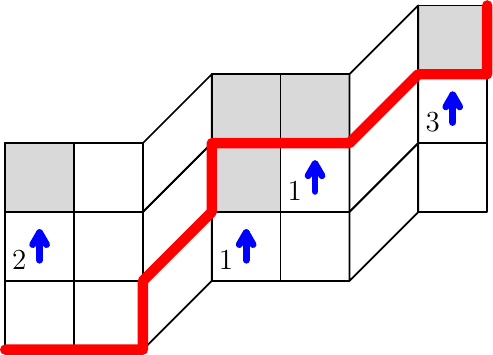}}
\centering
 \caption{A cylindric rhombic tableau $T$ of type $0212201022$ with a chosen arrow ordering $\sigma$.  The
 square tiles that are not free are  grey.}\label{fig:CRT_example}
 \end{figure}

We will define the \emph{weight} of each cylindric rhombic tableau.  To do so,
we need to introduce a few combinatorial statistics.

\begin{defn}
Given a subset $I$ of a finite sequence $U$ where $|U|=m$, we let 
$\Sym_{I,U}$ denote the set of total orders on $I$, which we also call
	\emph{partial permutations}.  We write the elements
of $\Sym_{I,U}$ as strings 
of length $m$, with a $*$ denoting elements not in $I$.  
\end{defn}

For example, if $U=\{1,2,\dots,6\}$ and $I=\{1,2,4\}$, then there are $|I|!$ total orders
on $I$, which we denote by
\[\Sym_{I,U} = \{1\ 2\ *\ 3\ *\ *,\ 1\ 3\ *\ 2\ *\ *,\ 2\ 1\ *\ 3\ *\ *,\ 2\ 3\ *\ 1\ *\ *,\ 3\ 1\ *\ 2\ *\ *,\ 3\ 2\ *\ 1\ *\ *\}.\]

\begin{defn}[Disorder]\label{def:disorder1}
Let $\widetilde{\Sym}_{I,U}$ denote the set of all sequences that can be obtained from the elements of $\Sym_{I,U}$ be inserting a $0$ in an arbitrary position. Given $\tilde{\sigma}\in \widetilde{\Sym}_{I,U}$, we define its \emph{disorder} $\dis(\tilde{\sigma})$ inductively as follows:

  Reading the entries of $\tilde{\sigma}$ from left to right starting
from the $0$,
	we let $\dist_1(\tsigma)$ equal the number of $*$'s or numbers bigger than $1$
we encounter before we reach the $1$.
	We then let $\dist_2(\tsigma)$ be the number of $*$'s or numbers bigger than $2$ we encounter
if we travel from the $1$ to the $2$ from left to right, wrapping around to the beginning
	of $\tsigma$ if necessary.  Similarly,
	$\dist_i(\tsigma)$ is the number of $*$'s or numbers bigger than $i$ we encounter
if we travel from the $i-1$ to the $i$ from left to right, wrapping around 
if necessary.  Finally we define the \emph{disorder} to be 
$\dis(\tsigma) = \dist_1(\tsigma) + \dist_2(\tsigma) + \dots + \dist_{|I|}(\tsigma).$
\end{defn}

If $\tsigma = 0\ 2\ 1\ *\ 3\ *\ *$, then 
$\dist_1(\tsigma) = 1$, $\dist_2(\tsigma) = 4$, $\dist_3(\tsigma) = 1$, and 
$\dis(\tsigma) = 6$.

\begin{remark}\label{betrayal}
In a recent paper \cite{KalizeszewskiMorse17}, a statistic very similar to disorder, 
	called \emph{betrayal}, was introduced on certain colored words in a formula for modified symmetric Macdonald polynomials $\tilde{H}_{\lambda}$. It would be interesting to understand the connection between the statistics on these different objects. 
\end{remark}

\begin{defn}[From an arrow ordering to a partial permutation]\label{def:tsigma}
Given a cylindric rhombic tableau $T$ 
	and an arrow ordering $\{\sigma^i\}$, we associate a partial
	permutation to each row of $T$ as follows.
We fix  $\west(i)$ and read its elements from \textbf{right to left},
skipping over non-free square tiles, but 
	recording free square tiles and rhombic tiles by a $*$, and
	arrows by their label.  We also record the vertical line in 
	$P(\mu)$ by a $0$.
We denote this partial permutation by $\tsigma^i$.
\end{defn}

For example, the rows of the tableau in \cref{fig:CRT_example} would give rise to the sequences
$\tsigma^1 = *\ *\ *\ 1\ *\ 0\ *\ *$, 
$\tsigma^2 = 3\ *\ 1\ 0\ *\ *\ 2$, and 
$\tsigma^3 = 0\ *\ *\ *$ (which are read from left to right).

We now define the \emph{disorder} for arrow orderings of 
 cylindric rhombic tableaux.

\begin{defn}[Disorder of a CRT with an arrow ordering]\label{def:disorder2}
Given a cylindric rhombic tableau $T$ with $k$ rows and an arrow ordering $\{\sigma^i\}=\{\sigma^1,\ldots,\sigma^k\}$, we define the \emph{disorder} 
of $(T,\{\sigma^i\})$ to be 
\[\dis(T,\{\sigma^i\}) = \sum_{i=1}^k \dis(\tsigma^i).\] 
\end{defn}

\begin{example}\label{ex:1}
Using  $(T,\{\sigma^i\})$ from  \cref{fig:CRT_example},
	we compute $\dis(*\ *\ *\ 1\ *\ 0\ *\ *) = 5$, $\dis(3\ *\ 1\ 0\ *\ *\ 2) = 5+2+0=7$, and $\dis(0\ *\ *\ *)=0$, so 
$\dis(T,\{\sigma^i\}) = 12$.
\end{example}

We let  $[i]=[i]_t$ denote the
$t$-analogue of the positive integer $i$, that is,
$[i]=\frac{1-t^i}{1-t} =  1+t+ \ldots+t^{i-1}$.  We also let $[i]!=[1][2]\ldots[i]$.

\begin{defn}\label{tweight_def}
The \emph{$t$-weight} $\wt_t(T)$ of a cylindric rhombic tableau $T$ of type $\mu\in\States(k,r,\ell)$ is computed as follows. 
	
 Given an arrow ordering $\{\sigma^i\}$ of the arrows in $T$, we define
\[
\wt_t(T,\{\sigma^i\})=
t^{\dis(T, \{\sigma^i\})}.
\]
We then define the \emph{$t$-weight} of $T$ to be 
\[\wt_t(T)=
	\frac{[r+\ell-\arr(T)]!}{[r+\ell]!}
\sum_{\{\sigma^i\}} \wt_t(T,\{\sigma^i\}),\]
 where $\{\sigma^i\}$ varies over all possible arrow orderings of $T$.
\end{defn}

\begin{example}
Continuing \cref{ex:1}, 
with 
\cref{fig:CRT_example}, we have  
	$r=2$, $\ell=5$, and 
	$\arr(T) = 4$.
	Thus 
	$\frac{[r+\ell-\arr(T)]!}{[r+\ell]!}=
	\frac{1}{[7] [6] [5] [4]}$.

To compute $\wt_t(T)$, we need to consider all possible arrow orderings.
Note that: 
\begin{itemize}
\item There is only one arrow ordering of $\west(1)$ and of $\west(3)$, so the weight contributed to 
		$\wt_t(T)$ by the possible arrow orderings of $\west(1)$ and $\west(3)$ is just $t^5$.
\item If we represent the arrows versus rhombic/free tiles in $\west(2)$
 by $x$'s and  $*$'s, respectively,
then the content of $\west(2)$ can be encoded by
the sequence $x\ *\ x\ 0\ *\ *\ x$.
		We have 
		$\dis(1\ *\ 2\ 0\ *\ *\ 3)= 6$,
		$\dis(1\ *\ 3\ 0\ *\ *\ 2)= 8$,
		$\dis(2\ *\ 1\ 0\ *\ *\ 3)= 11$,
		$\dis(2\ *\ 3\ 0\ *\ *\ 1)= 3$,
		$\dis(3\ *\ 1\ 0\ *\ *\ 2)= 7$, and
		$\dis(3\ *\ 2\ 0\ *\ *\ 1)= 6$.
		Thus the weight contributed by the possible arrow labelings of $\west(2)$ (only one of which is shown in \cref{fig:CRT_example})
		of is $t^3+2t^6+t^7+t^8+t^{11}$.
\end{itemize}

Letting $I_i$ denote the positions of the arrows in $\west(i)$ and $U_i$
denote the positions of the arrows and free tiles in $\west(i)$, 
we can write 
	the total weight of this tableau for all possible arrow orderings $\{\sigma^i\}=(\sigma^1,\dots,\sigma^5)$  as 
\[
	\wt_t(T)=\frac{1}{[7][6][5][4]}\prod_{m=1}^3\sum_{\sigma^m\in \Sym_{I_m, U_m}} t^{\dis(\tilde{\sigma}^m)}  =\frac{(t^5)(t^3+2t^6+t^7+t^8+t^{11})}{[7][6][5][4]}.
\]
\end{example}

\begin{remark}
It is interesting to note that if $I=U$, disorder on $\Sym_{I,U} = \Sym_{|I|}$ 
is a Mahonian statistic, i.e. it has the same distribution as \emph{inversions}.
\end{remark}

\begin{defn}[Combinatorial partition function]\label{Z1}
Given $\mu\in\States(k,r,\ell)$, we define 
\[\Tab_t(\mu):=\sum_{T\in\CRT(\mu)} \wt_t(T).\] 
We also define
 the \emph{combinatorial partition function} of the cylindric rhombic tableaux to be
	\[\mathcal{Z}_{k,r,\ell}(t) = \sum_{\mu\in\States(k,r,\ell)} \Tab_t(\mu).\] 
\end{defn}

We are finally ready to state the first main result of this paper.

\begin{thm}\label{main_result}
Consider the two-species asymmetric simple exclusion process
 $\2ASEP(k,r,\ell)$. 
Then the steady state probability of being in state $\mu$, where 
$\mu\in\States(k,r,\ell)$, is 
\[
	\Pr(\mu) = \frac{\Tab_t(\mu)}{\mathcal{Z}_{k,r,\ell}(t)}, 
\]
where $\Tab_t(\mu)$ and 
	$\mathcal{Z}_{k,r,\ell}(t)$ are as in \cref{Z1}.
\end{thm}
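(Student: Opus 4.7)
The plan is to prove the theorem via the Matrix Ansatz (Theorem \ref{ansatz}). It suffices to construct matrices $A_0, A_1, A_2$ satisfying the relations \eqref{eq:MA} such that $\tr(\Mat(\mu))$ equals $\Tab_t(\mu)$ up to a multiplicative constant independent of $\mu$; normalizing by summing over $\mu\in\States(k,r,\ell)$ then yields the stated probability formula, since the constant is absorbed into the partition function and $Z_{k,r,\ell}$ in Theorem \ref{ansatz} is replaced by $\mathcal{Z}_{k,r,\ell}(t)$.

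For the matrix construction, I would take basis vectors for the underlying space indexed by snapshots of a row under construction, which record which columns currently contain arrows together with any labels needed to compute the row's disorder. The operator $A_2$ appends a square column (either free or carrying a new labeled arrow), $A_1$ appends a rhombic column, and $A_0$ closes the current row—inserting the anchor $0$ that initiates the disorder reading—and opens a fresh one. Since $\mathcal{H}(\mu)$ lives on a cylinder of $k$ rows, the trace glues the last row back to the first, producing exactly a CRT of type $\mu$. With matrix entries chosen so that each appended tile contributes exactly the power of $t$ dictated by $\dist_i$, and with the prefactor $\frac{[r+\ell-\arr(T)]!}{[r+\ell]!}$ appearing naturally from a partial symmetrization built into the construction (for instance, by tensoring with an auxiliary factor that records an unordered pool of free positions), expanding $\tr(\Mat(\mu))$ tile by tile should reproduce $\sum_{T\in\CRT(\mu)}\wt_t(T) = \Tab_t(\mu)$, matching Definitions \ref{def:disorder2} and \ref{tweight_def}.

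It remains to verify the three quadratic relations \eqref{eq:MA}. Each corresponds to swapping adjacent letters $ij\leftrightarrow ji$ in $\mu$ and exchanges the roles of two adjacent columns in every row; the coefficients $t$ and $1-t$ encode how the disorder changes when an arrow is crossed by a larger label, by a $*$, or by the anchor $0$. I expect $A_0A_1 = tA_1A_0 + (1-t)A_1$ and $A_1A_2 = tA_2A_1 + (1-t)A_1$ to follow from a relatively clean local bijection, since rhombic tiles themselves carry no arrows and contribute only a $*$ to $\tilde\sigma^i$. The main obstacle is the relation $A_0A_2 = tA_2A_0 + (1-t)(A_0+A_2)$: swapping $02\leftrightarrow 20$ perturbs a row boundary, so we must simultaneously track how an arrow in the swapped square contributes to $\arr(T)$, how the \emph{free}/\emph{pointed-at} status of the square propagates between rows, and how the disorder anchor moves relative to the arrow labels in the newly modified row. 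A careful case analysis—distinguishing whether the affected square is free, contains an arrow, or is pointed at from below—combined with the partial symmetrization from the auxiliary factor should collapse to the required identity, with the $(1-t)(A_0+A_2)$ term capturing the two degenerate configurations that cannot be accommodated by a simple weight-$t$ swap. Once all three relations hold, the Matrix Ansatz delivers the theorem.
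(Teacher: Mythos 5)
You have correctly identified the Matrix Ansatz (\cref{ansatz}) as the engine of the proof, and your reduction is logically sound: matrices satisfying \cref{eq:MA} whose traces equal $\Tab_t(\mu)$ up to a $\mu$-independent constant would give the theorem. But both essential ingredients of your plan are missing, and the one you flag as ``the main obstacle'' is exactly where the difficulty lives. First, the transfer matrices are never actually defined, and there is a structural reason to doubt the ``row snapshot'' construction as sketched: the weight $\wt_t(T)$ of \cref{tweight_def} is not a product of local tile contributions. The prefactor $[r+\ell-\arr(T)]!/[r+\ell]!$ depends on the total number of arrows in the whole tableau, and the disorder of a row depends on the entire arrow ordering of that row read cyclically from the anchor $0$; an ``auxiliary factor recording an unordered pool of free positions'' is named but not constructed, and it is precisely this piece that must reconcile a global, denominator-bearing weight with local matrix entries for which quadratic relations can be checked. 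Second, the relation $A_0A_2=tA_2A_0+(1-t)(A_0+A_2)$ is asserted to ``collapse'' after a case analysis that is not carried out. As written this is a plausible research plan, not a proof.

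For comparison, the paper goes in the opposite direction: it starts from the explicit infinite matrices of Cantini--de~Gier--Wheeler (\cref{defn:matrices}), for which the Ansatz relations are the $x=y=1$, $q=1$ specialization of \cref{lem:relations}, and then proves $\Tab_{qtx}(\mu)=(1-qt^{r})\tr(\Mat(\mu))$ not by a direct transfer-matrix expansion of the trace into tableaux, but by showing that both sides satisfy the same recurrence obtained by deleting a $0$ from $\mu$ (equivalently, stripping the bottom row of the tableau and the columns of its arrows) --- see \cref{thm:ansatz_rec} and \cref{thm:tableau} --- together with matching base cases on words in $\{1,2\}^{n}$ (\cref{lem:AEword}, \cref{lem:AE2}). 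Notably, the factors $1/[r+\ell]_{qt}!$ arise there from geometric series over the infinite diagonal index in the trace, not from local tile weights; this is a concrete sign that the bookkeeping your construction defers to the auxiliary factor is genuinely global. If you want to complete your argument, the realistic path is to replace the unverified relation-checking for a bespoke tableau transfer matrix with the paper's recurrence-plus-base-case strategy.
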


\begin{example}
To compute the steady state probability 
$\Pr(221100)$ of the state $221100$ of 
the $\2ASEP(2,2,2)$, we need to 
sum the weights of all cylindric rhombic tableaux of type
$221100$, see 
	\cref{EEAADD}.  We then find that 
	$\Pr(221100) = 
\frac{(t+1)(t^4+t^3+6t^2+t+6)}{[4][3]Z_{2,2,2}}.$
\begin{figure}[!ht]
  \centerline{\includegraphics[width=\textwidth]{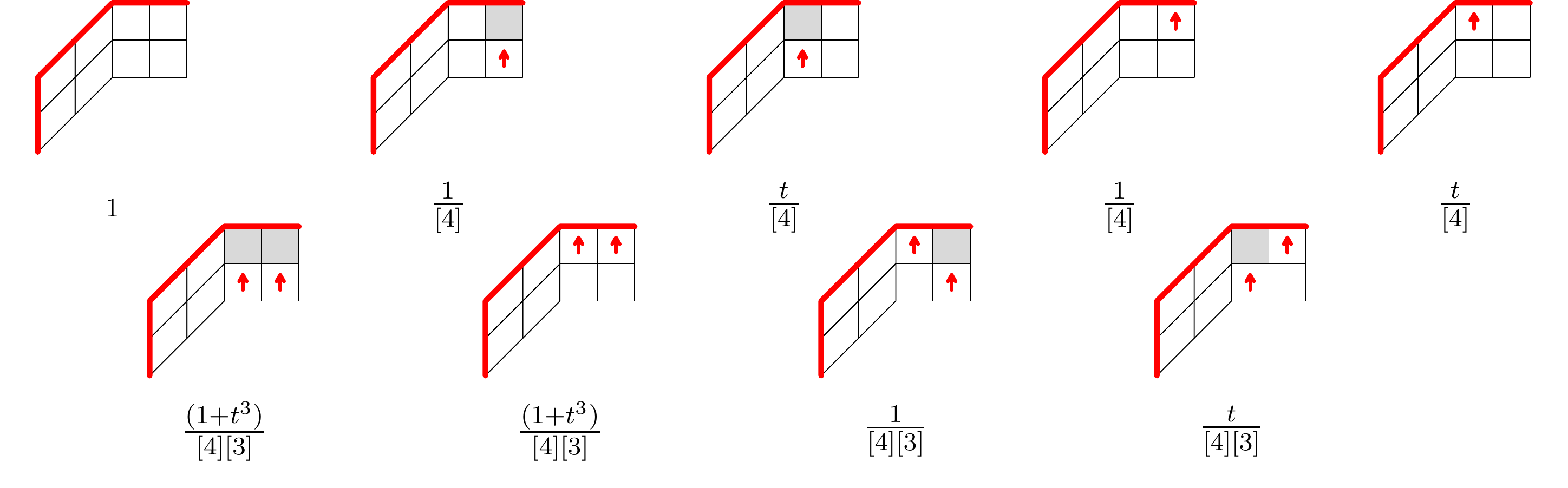}}
\centering
	\caption{
	 The cylindric rhombic tableaux of type $221100$ and their weights.}
\label{EEAADD}
 \end{figure}
\end{example}

\begin{example}
To compute the steady state probability 
$\Pr(201021)$ of the state $201021$ of 
the $\2ASEP(2,2,2)$, we need to 
sum the weights of all cylindric rhombic tableaux of type
$201021$, see 
	\cref{EDADEA}.  We then find that 
	$\Pr(201021) 
	= \frac{(t+1)(t^2+t+1)(2t^2+t+2)}{[4][3]\mathcal{Z}_{2,2,2}(t)}.$
\begin{figure}[!ht]
  \centerline{\includegraphics[width=\textwidth]{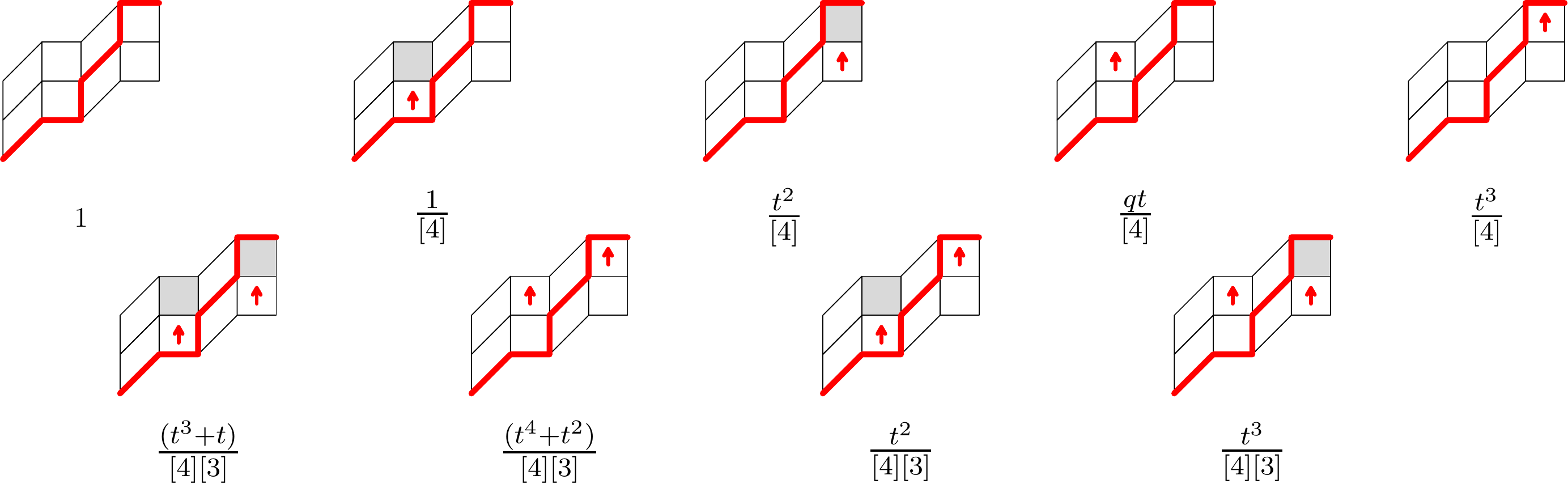}}
\centering
	\caption{
		 The cylindric rhombic tableaux of type $201021$ and their weights.}
\label{EDADEA}
 \end{figure}
\end{example}

\begin{remark}
For a given state $\mu$ of the $\2ASEP(k,r,\ell)$, we can multiply $\Tab_t(\mu)$ by the scalar $[r+\ell]!/[r]!$ to obtain polynomials in $t$ with positive coefficients. The resulting
	polynomials display a ``particle-hole symmetry,''  see \cref{conj:complement} and \cref{conj:conjugate}. 
\end{remark}

\section{Formulas for Macdonald polynomials using cylindric rhombic tableaux}

Symmetric Macdonald polynomials \cite{Macdonald} are a family of multivariable orthogonal polynomials
indexed by partitions, whose coefficients depend on two parameters $q$ and $t$.  
In  recent works \cite{CGW, CGW-arxiv}, 
Cantini, de Gier, and Wheeler gave a link between the multi-species exclusion 
process on a ring and Macdonald polynomials.  
In this section we will give a combinatorial formula for Macdonald polynomials in a special case; the proof of our formula uses some results from \cite{CGW}.

Let $F=\mathbb{Q}(q,t)$ be the field of rational functions in $q$ and $t$, 
and let $m_{\lambda}$
denote the monomial symmetric polynomial indexed by the partition $\lambda$.  The Macdonald polynomials are 
defined as follows.

\begin{defn}
Let $\langle \cdot, \cdot \rangle$ denote the Macdonald inner product on power sum symmetric functions
\cite[Chapter VI, Equation (1.5)]{Macdonald}, where $<$ denotes the dominance order on 
partitions \cite[Chapter I, Section 1]{Macdonald}.  The \emph{Macdonald polynomial} 
$P_{\lambda}(x_1,\dots,x_n; q,t)$ is the unique homogeneous symmetric polynomial in 
$x_1,\dots,x_n$ with coefficients in $F$ which satisfies
\begin{align*}
\langle P_{\lambda}, P_{\mu} \rangle &= 0, \text{ for }\lambda \neq \mu, \\
P_{\lambda}(x_1,\dots,x_n; q,t) &= m_{\lambda}(x_1,\dots,x_n) + \sum_{\mu < \lambda} c_{\lambda,\mu}(q,t)
m_{\mu}(x_1,\dots,x_n),
\end{align*}
i.e. the coefficients $c_{\lambda,\mu}(q,t)$ of the lower degree terms are  determined by 
the orthogonality conditions.
\end{defn}

The \emph{nonsymmetric Macdonald polynomials} $E_{\mu}$, which are indexed by compositions, were later defined by Opdam \cite{Opdam} and Cherednik \cite{Cher1, Cher2} as joint eigenfunctions of a family of commuting operators in the double affine Hecke algebra, with $P_{\lambda}$ obtained as a sum over the $E_{\mu}$:
\[
P_{\lambda} = \sum_{\mu} E_{\mu},
\]
for $\mu$ ranging over all permutations of $\lambda$. For more details,
see \cite{Macdonald}.

In this section we enhance our weight function on tableaux, to include 
 an additional parameter $q$ and variables $x_1,\dots,x_n$
(where $n=k+r+\ell$).  We then give our second main result, which 
is a formula for certain Macdonald polynomials in terms of cylindric 
rhombic tableaux.  
In particular,
we will give a formula for the nonsymmetric Macdonald polynomial
$E_{\lambda}$ 
and a formula
for the symmetric Macdonald polynomial $P_{\lambda}$, 
where $\lambda$ is any partition in $\{0,1,2\}^*$.
Note that Haglund, Haiman and Loehr have given combinatorial formulas 
for both the nonsymmetric Macdonald polynomials and the symmetric Macdonald
polynomials in terms of \emph{nonattacking fillings of composition diagrams} \cite{HHL1, HHL2}; it would
be interesting to understand how our
 formulas relate to theirs.

\begin{defn}\label{tweight}
	We refer to the left and right border of 
	a cylindric tableau (which are identified)
	as its \emph{vertical boundary}.
Given a cylindric rhombic tableau $T$ with path $P(\mu)$
and
arrow ordering $\{\sigma^i\}$, for each row $\row(i)$, we define $\cyc(T,\sigma^i)$ to be the number of times we cross the vertical boundary if we start at the vertical line in $P(\mu)$ in row $i$ and then travel from right to left (wrapping around if necessary) to the arrow labeled $1$, then the arrow labeled $2$, and so on. We define the \emph{cycling} of $(T,\{\sigma^i\})$ to be 
\[\cyc(T,\{\sigma^i\}) = \sum_i \cyc(T,\sigma^i).\]
\end{defn}

Recall that a \emph{recoil} of a (partial) permutation $\sigma$ 
is a pair $(j+1,j)$ such that $\sigma^{-1}(j+1)<\sigma^{-1}(j)$.  In other
words, it is a pair of values $(j+1,j)$ where $j+1$ appears to the 
left of $j$ in $\sigma$.  For example, the partial permutation 
$3*10**2$ has two recoils, $(1,0)$ and $(3,2)$.
Note that $\cyc(T,\sigma)$ for a given row's arrow ordering 
$\{\sigma^i \}$ is equal to the number of recoils of $\tsigma$ 
(see \cref{def:tsigma}).
The cycling statistic defined above will contribute to the power of $q$
associated to each tableau and arrow ordering.

Now given a CRT with path $P(\mu)$, 
let us number the steps of $P(\mu)$ from northeast to southwest using the numbers 
$1,2,\dots,n$, where $n=|\mu|$; see \cref{fig:CRT_example2}.  
This allows us to give every row and column of 
$\mathcal{H}(\mu)$ a unique integer label, and we will subsequently refer to row $i$ and column $j$
using this labeling.

\begin{figure}[!ht]
  \centerline{\includegraphics[width=2in]{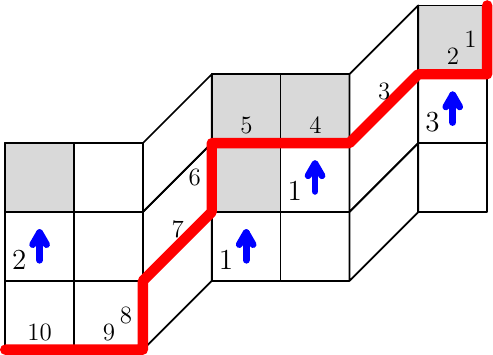}}
\centering
 \caption{A cylindric rhombic tableau  of type $\mu=0212201022$, 
with the steps of $P(\mu)$ labeled from $1$ to $10$.}
\label{fig:CRT_example2}
 \end{figure}

\begin{defn}\label{xweight_def}
The \emph{$x$-weight} $\wt_x(T, \{\sigma^i\})$ of a cylindric rhombic tableau $T$ 
of type $\mu\in\States(k,r,\ell)$ 
with an arrow ordering
$\{\sigma^i\}$  
is computed as follows. 

For each arrow $a$ in $T$, if its label given by the arrow ordering 
is the maximum among all arrows in its row, then we set
$\wt(a) = x_i$, where $i$ is the row label of the square containing $a$.  
Otherwise, we set $\wt(a)=x_j$,
where $j$ is the column label of the square containing $a$. 

For each column $c$ of squares in $T$, if $c$ contains no arrows,
we set $\wt(c)=x_j$, where $j$ is the column label of $c$.  Otherwise we set $\wt(c)=1$.

We also define $\wt_{12}(\mu):=\prod_{k\in \EA(\mu)} x_k$, where 
$\EA(\mu) = \{i \ \vert \ \mu_i = 1 \text{ or }2\}$.

Finally we define the $\emph{$x$-weight}$ 
$\wt_x(T,\{\sigma^i\})$ of $(T,\{\sigma^i\})$ to be 
\[\wt_x(T,\{\sigma^i\}) = \wt_{\EA}(\mu) \prod_a \wt(a) \prod_c \wt(c),\]
where the products are over all arrows $a$ and columns $c$ of squares of $T$.
\end{defn}

\begin{remark}
It follows from the above definition that given 
	a CRT $T$ of type $\mu\in \States(k,r,\ell)$,
	the $x$-weight of $T$ is a monomial in $x_1\dots x_n$
	of degree $r+2\ell$.
\end{remark}

\begin{example}\cref{fig:CRT_example2} shows a cylindric rhombic tableau $T$ of type $0212201022$ together with an 
arrow ordering $\{\sigma^i\}$.  In this example we have 
$\prod_a \wt(a) = x_4 x_6 x_8 x_{10}$, $\prod_c \wt(c) = x_9$,
$\wt_{\EA}(\mu) = x_2 x_3 x_4 x_5 x_7 x_9 x_{10}$.  Therefore
\[\wt_x(T,\{\sigma^i\}) = x_2 x_3 x_4^2 x_5 x_6 x_7 x_8 x_9^2 x_{10}^2.\]
\end{example}

Given a positive integer $i$, let $[i]_{qt} = \frac{1-qt^i}{1-t}$,
and let $[i]_{qt}! = [i]_{qt} [i-1]_{qt} \dots [1]_{qt}$. Note that when $q=1$, $[i]_{qt}$ recovers the quantity $[i]=[i]_t$ we defined earlier. Finally we are ready to define the $qtx$-weight of a cylindric rhombic
tableau.

\begin{defn}\label{weight_def}
Let 
$T$ be a cylindric rhombic tableau  of type $\mu\in\States(k,r,\ell)$, and 
	and let $\{ \sigma^i\}$ be an arrow ordering of its arrows.
	The \emph{$qtx$-weight} $\wt_{qtx}(T,\{\sigma^i\})$ is defined to be
	\begin{equation}
	\wt_{qtx}(T,\{\sigma^i\})=
		{t^{\dis(T, \{\sigma^i\})} q^{\cyc(T,\{\sigma^i\})} 
		\wt_x(T,\{\sigma^i\})}.
	\end{equation}
We then define the \emph{$qtx$-weight} of $T$ to be 
	\[\wt_{qtx}(T)=
	\frac{[r+\ell-\arr(T)]_{qt}!}{[r+\ell]_{qt}!}
	\sum_{\{\sigma^i\}} \wt_{qtx}(T,\{\sigma^i\}),\]
 where $\{\sigma^i\}$ varies over all possible arrow orderings of $T$.
\end{defn}

\begin{defn}
Given $\mu\in\States(k,r,\ell)$, we define 
	\[\Tab_{qtx}(\mu):=\sum_{T\in \CRT(\mu)} \wt_{qtx}(T)\] 
\end{defn}

\begin{example}
	\cref{EDADEAt} shows the cylindric rhombic tableaux of type 
	$201021$.  
	The sum of the weights of all the tableaux 
	is $\Tab_{qtx}(201021)=x_1^2x_3x_5^2x_6+\frac{(x_1+qt^2x_5)x_1x_3x_4x_5x_6}{[4]_{qt}}+\frac{(tx_1+qt^3x_5)x_1x_2x_3x_5x_6}{[4]_{qt}}+\frac{q(t^3x_1+tx_5)x_1x_3x_4x_5x_6}{[4]_{qt}[3]_{qt}}+\frac{q(t^4x_1+t^2x_5)x_1x_2x_3x_5x_6}{[4]_{qt}[3]_{qt}}+\frac{q(t^2+t^3)x_1x_2x_3x_4x_5x_6}{[4]_{qt}[3]_{qt}}$.
\begin{figure}[!ht]
  \centerline{\includegraphics[width=\textwidth]{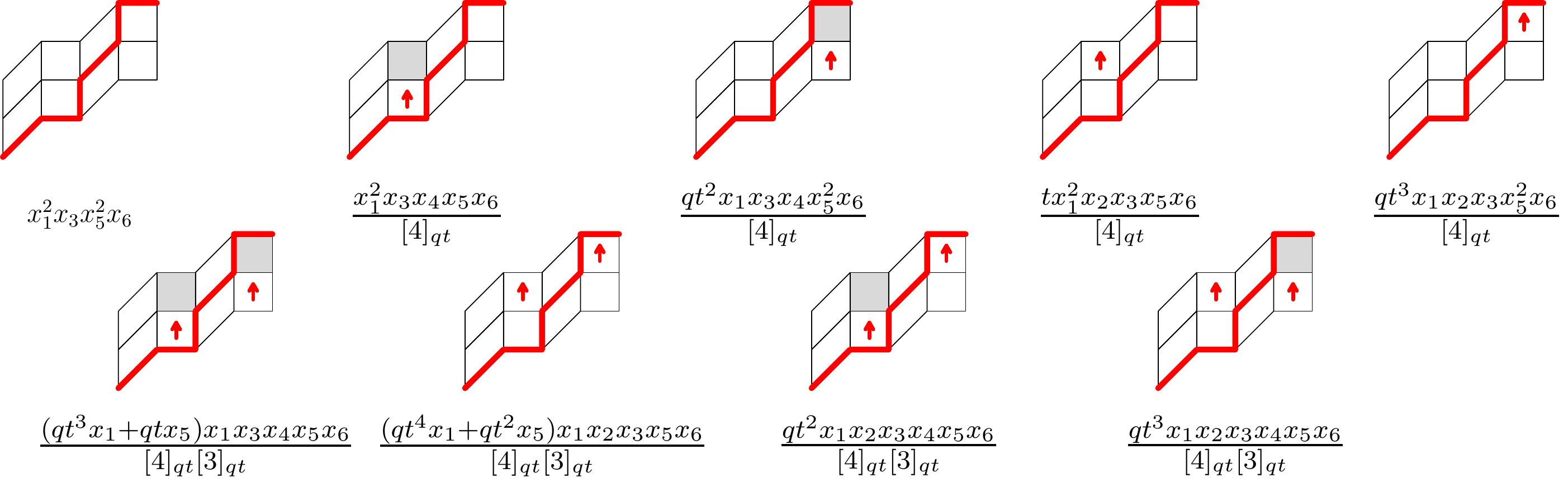}}
\centering
	\caption{
		All cylindric rhombic tableaux of type $201021$, along with their weights.}
	
\label{EDADEAt}
 \end{figure}
\end{example}

The second main result of this paper is the following.

\begin{thm}\label{main_result2}
For any partition $\lambda = (\lambda_1,\dots,\lambda_n)$
 of the form 
$2^\ell 1^r 0^k$, we have that 
the 
nonsymmetric Macdonald polynomial $E_{\lambda}$ is given by 
\begin{equation}
E_{\lambda}(x_1,\dots,x_n; q,t) = \Tab_{qtx}(2^\ell 1^r 0^k).
\end{equation}
Moreover 
the symmetric Macdonald polynomial $\mathcal{P}_{\lambda}$ is given by  
\begin{equation}
\mathcal{P}_{\lambda}(x_1,\dots,x_n; q, t) = \sum_{\mu} \Tab_{qtx}(\mu),
\end{equation}
where the sum runs through all distinct permutations $\mu$ of $\lambda$.
\end{thm}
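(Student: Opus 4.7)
The plan is to reduce \cref{main_result2} to a refined matrix-product formula for nonsymmetric Macdonald polynomials, building on the link between the multispecies ASEP on a ring and Macdonald polynomials established in \cite{CGW}. I would in fact prove the stronger nonsymmetric identity $E_\mu(x_1,\dots,x_n;q,t)=\Tab_{qtx}(\mu)$ for every composition $\mu$ with content $2^\ell 1^r 0^k$ (of which $\mu=\lambda$ is the special case of weakly decreasing entries). The symmetric formula then follows immediately from $P_\lambda=\sum_\mu E_\mu$, summed over distinct permutations $\mu$ of $\lambda$.

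For the nonsymmetric step, I would import from \cite{CGW} a $q$-deformed matrix ansatz: operators $A_0(x_i), A_1(x_i), A_2(x_i)$, depending on $q$ and $t$, acting on an auxiliary vector space, satisfying exchange relations extending \cref{eq:MA}, and such that $E_\mu$ is realized as a suitable trace (or matrix element) of $A_{\mu_1}(x_1)\cdots A_{\mu_n}(x_n)$. Expanding this product in a chosen basis produces a sum over paths in the auxiliary space; the core combinatorial step is to exhibit a weight-preserving bijection between these paths and pairs $(T,\{\sigma^i\})$ of a CRT of type $\mu$ together with an arrow ordering. Intuitively, each $A_0$ factor should contribute one row of $T$, each $A_1$ (rhombus) and $A_2$ (square) factor to its right should decorate that row, the arrows should encode which heavier particles cross which holes under repeated application of the exchange relations, and the arrow labels should record the order of those crossings.

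Under the bijection, one must then check that the three ingredients of $\wt_{qtx}(T,\{\sigma^i\})$ track the correct contributions: $\dis(T,\{\sigma^i\})$ should record the powers of $t$ produced by commuting heavier particles past lighter ones via the exchange relations; $\cyc(T,\{\sigma^i\})$ should record the powers of $q$ introduced by the cyclic wraparound of the trace, consistent with the identification (noted after \cref{def:tsigma}) of $\cyc$ with the number of recoils of $\tsigma$; and $\wt_x(T,\{\sigma^i\})$ should collect the spectral parameters $x_i$ attached to each site. The prefactor $[r+\ell-\arr(T)]_{qt}!/[r+\ell]_{qt}!$ should emerge as the $qt$-symmetrization over equivalent arrow orderings that produce the same underlying path, and setting $q=1,\,x_i=1$ must recover the $t$-weight of \cref{tweight_def} and hence \cref{main_result} via the Matrix Ansatz \cref{ansatz}, a useful sanity check.

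The main obstacle, I expect, is the identification of $\cyc$ with the $q$-grading of the matrix product. The $\dis$-to-$t$ correspondence is closely analogous to the $q=1$ situation of \cref{main_result}, which is handled by the classical Matrix Ansatz; however, the $q$-deformation demands careful bookkeeping of how the $R$-matrix governing the exchange relations of \cite{CGW} acts when the product wraps around the ring, and matching recoils in $\tsigma^i$ to boundary crossings in the trace expansion is the step I expect to require the most delicate argument. A secondary technical issue is extracting the $[i]_{qt}$-symmetrization factor from the full sum over arrow orderings in a way that is compatible with the matrix-product normalization. Once the nonsymmetric identity $E_\mu = \Tab_{qtx}(\mu)$ is secured for all compositions $\mu$ with content $2^\ell 1^r 0^k$, the symmetric case is an immediate consequence of $P_\lambda=\sum_\mu E_\mu$ applied over distinct permutations of $\lambda$.
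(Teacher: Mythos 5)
There is a genuine gap at the very first step. The ``stronger nonsymmetric identity'' $E_\mu(x;q,t)=\Tab_{qtx}(\mu)$ for \emph{every} composition $\mu$ with content $2^\ell 1^r 0^k$ is false, and the paper warns about precisely this point: what the trace construction computes for a general composition $\mu$ is the polynomial $f_\mu$ of \cref{eq:f} (here the normalization $\Omega_\lambda$ equals $1-qt^r$), and by the footnote to \cref{CGWTheorem} one has $f_\mu=E_\mu$ only when $\mu$ is the partition $\lambda=2^\ell 1^r 0^k$ itself; for other orderings of the parts the $f$'s and the $E$'s differ, being related by a triangular change of basis. Since $\Tab_{qtx}(\mu)$ turns out to equal $f_\mu$ for all $\mu$, your proposed termwise identity cannot hold, and your derivation of the symmetric formula from $P_\lambda=\sum_\mu E_\mu$ combined with that termwise identification does not go through. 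The correct route is to prove $\Tab_{qtx}(\mu)=(1-qt^r)\tr(\hMat(\mu))=f_\mu$ for all compositions $\mu$, then invoke both halves of \cref{CGWTheorem}: $E_\lambda=f_\lambda$ for the partition, and $P_\lambda=\sum_\mu f_\mu$ for the symmetric polynomial. One must also fix the overall scalar; the paper does this by checking that the coefficient of $x_\lambda$ is $1$ on both sides (the unique arrowless CRT versus the unique nonattacking filling of weight $x_\lambda$).

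Setting that aside, your method for relating tableaux to the matrix product also diverges from the paper's and is only a sketch. The paper does not expand the trace into paths in the auxiliary space and biject them with pairs $(T,\{\sigma^i\})$; instead it shows that $\tr(\hMat(\mu))$ and $\Tab_{qtx}(\mu)$ satisfy the \emph{same} recurrence that strips off one $A_0(x_d)$, equivalently one row of the tableau, at a time (\cref{thm:ansatz_rec} and \cref{thm:tableau}), and matches the base case of words in $\{1,2\}^*$ via \cref{lem:AEword} and \cref{lem:AE2}. The point you flag as delicate --- where the powers of $q$ recorded by $\cyc$ come from --- is exactly where the paper's commutation argument does concrete work: the relation $A_0(x)S=SA_0(qx)$ lets the marked $A_0$ wrap around the trace, producing geometric series $1+qt^{r+\ell-i}+q^2t^{2(r+\ell-i)}+\cdots$ that assemble into the factors $1/[r+\ell-i]_{qt}$ and attach a $q$ to each recoil of $\tsigma$. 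Your sketch names this issue but supplies no mechanism for it, so even the (correct) intermediate claim $\Tab_{qtx}(\mu)=f_\mu$ is not established by the proposal as written.
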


\begin{example} 
	Using SageMath \cite{sagemath}, 
	we find that the nonsymmetric Macdonald polynomial 
	$E_{221100} = E_{221100}(x_1,\dots,x_6; q,t)$ equals 
 $$E_{221100} = x_1^2x_2^2x_3x_4 + 
\frac{q(x_1+x_2)(x_5+x_6)x_1 x_2 x_3 x_4}{[3]_{qt}} + 
\frac{q^2(1+t)x_1 x_2 x_3 x_4 x_5 x_6}{[3]_{qt}[4]_{qt}}.$$ This agrees with the sum of the weights of 
	the tableaux of type $\mu=221100$, see  \cref{EEAADD2}.

\begin{figure}[!ht]
  \centerline{\includegraphics[width=\textwidth]{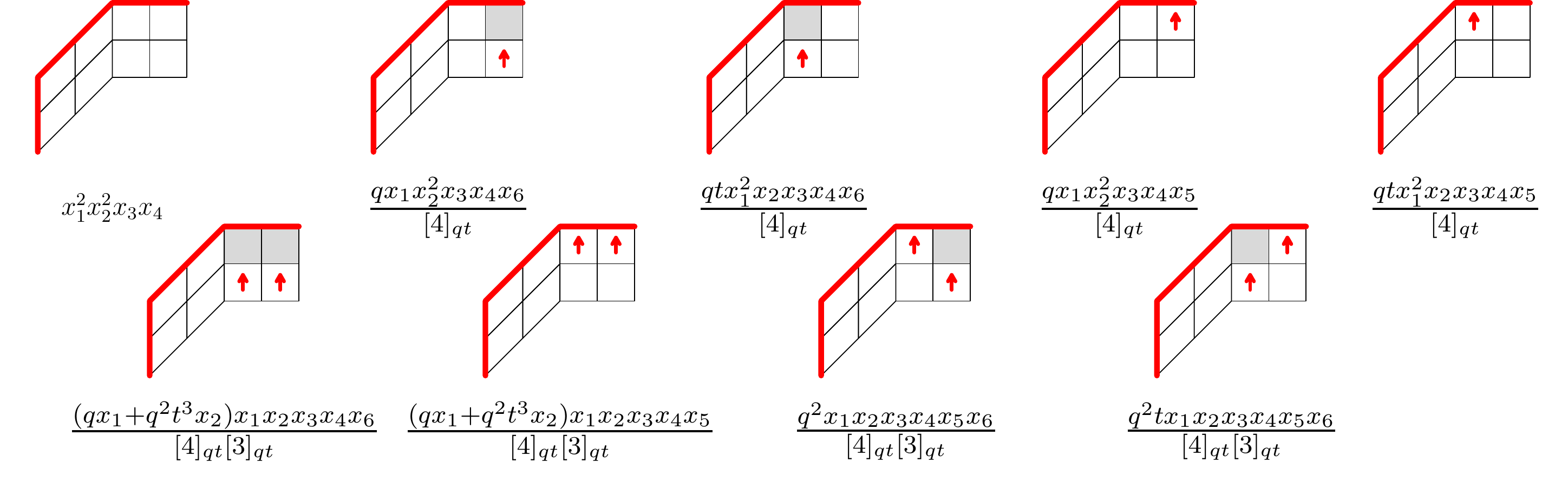}}
\centering
	\caption{All cylindric rhombic tableaux of type $221100$, along with their weights. }
\label{EEAADD2}
 \end{figure}
\end{example}

\section{The Matrix Ansatz and the results of Cantini-deGier-Wheeler}\label{CGW}
In order to prove \cref{main_result} and \cref{main_result2}, we need to
 introduce some matrices from \cite{CGW}, which 
can be used to compute certain  Macdonald polynomials.

\begin{defn}\cite[(53)]{CGW}\label{defn:matrices}
We define semi-infinite matrices $A_0(x)$, $A_1(x)$, $A_2(x)$, and $S$, whose rows and columns are 
indexed by $\Z_{\geq 0}$.  

  Let $A_0(x)=(A_0(x)_{i,j})$ be 
  defined by 
  \[
  A_0(x)_{i,j}=\left\{ \begin{array}{ll}
	  1 & {\rm if}\ i=j\\
	  x & {\rm if}\ i=j-1\\
	  0 & {\rm otherwise}.
  \end{array} \right.
  \]

  Let $A_2(x)=(A_2(x)_{i,j})$ be defined by 
  \[
  A_2(x)_{i,j}=\left\{ \begin{array}{ll}
	  x^2 & {\rm if}\ i=j\\
	  x(1-t^i) & {\rm if}\ i=j+1\\
	  0 & {\rm otherwise.}
  \end{array}\right.
  \]

  Let $A_1(x)=(A_1(x)_{i,j})$ be a {\bf diagonal} matrix defined by
  \[
  A_1(x)_{i,i}=xt^i.
  \]

  Let $S=(S_{i,j})$ be a {\bf diagonal}  matrix defined by 
  \[
  S_{i,i}=q^i.
  \]
\end{defn}

Cantini, deGier, and Wheeler \cite{CGW} 
proved that Macdonald polynomials can be computed in terms of 
the matrices above as follows.  (We restrict to the setting where compositions have parts equal to 
$0$, $1$, or $2$.)

\begin{thm}\cite[(16),(24), Lemma 3]{CGW}\label{CGWTheorem}
Given a composition $\mu = (\mu_1,\dots,\mu_n) \in \{0,1,2\}^n$, let 
$\lambda$ be the partition obtained from $\mu$ by sorting its parts, and 
set \begin{equation}
	\Omega_{\lambda}(q,t) = \prod_{1 \leq i < j \leq s} ({1-q^{j-i} t^{\lambda'_i - \lambda'_j}}),
\end{equation}
where $s$ is the largest part of $\lambda$.
We define
	\begin{equation}\label{eq:f}
f_{\mu}(x_1,\dots,x_n; q,t) = \Omega_{\lambda} \tr [A_{\mu_1}(x_1) \dots A_{\mu_n}(x_n) S].
\end{equation}

For any partition  $\lambda = (\lambda_1,\dots,\lambda_n)\in \{0,1,2\}^*$, 
the 
nonsymmetric Macdonald polynomial $E_{\lambda}$ is given by 
	\begin{equation}\label{eq:nonsymmetric}
		E_{\lambda}(x_1,\dots,x_n; q,t) = f_{\lambda}(x_1,\dots,x_n; q,t).\footnote{Note that \eqref{eq:nonsymmetric} does not hold if we replace 
		$\lambda$ with an arbitrary composition.  Instead the two 
		families of polynomials (the $E$'s and the $f$'s) are related via 
		a triangular change of basis, see \cite[(23)]{CGW}.}
\end{equation}
Moreover the symmetric Macdonald polynomial $\mathcal{P}_{\lambda}$ is given by  
\begin{equation}
\mathcal{P}_{\lambda}(x_1,\dots,x_n; q, t) = \sum_{\mu} f_{\mu}(x_1,\dots,x_n; q, t),
\end{equation}
	where the sum runs through all distinct permutations $\mu$ of $\lambda$.\footnote{Note that \cite{CGW} uses some unusual conventions for Macdonald polynomials.  In particular the polynomial  computed in \cite[page 10]{CGW} and 
	\cite[Section 4]{CGW-arxiv} is what we (and SageMath and 
	\cite{HHL3}) would refer to as 
	$E_{(2,2,1,1,0,0)}(x_6, x_5,\dots,x_1; q,t)$, rather than 
	$E_{(0,0,1,1,2,2)}(x_1, x_2 \dots,x_6; q,t)$.  We have stated 
	\cref{CGWTheorem} so as to be consistent with our conventions
	(and those of SageMath and \cite{HHL3}), so it looks slightly 
	different than the version given in \cite{CGW}.}
\end{thm}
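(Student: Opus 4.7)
My plan is to follow a quantum-integrability approach: realize $f_\mu$ as the trace of a monodromy-like object built from the $A_i(x)$'s, show that this trace satisfies the defining characterization of $E_\lambda$ as a joint eigenfunction of Cherednik's $Y$-operators, and then fix the scalar $\Omega_\lambda$ by a leading-monomial computation. The symmetric case is then immediate from $\mathcal{P}_\lambda = \sum_\mu E_\mu$.

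First I would establish a Zamolodchikov--Faddeev (ZF) exchange algebra for the three matrices of \cref{defn:matrices}. Organizing $(A_0, A_1, A_2)$ into a single object $\vec A(x)$ taking values in $V \otimes \mathrm{End}(W)$, where $V = \mathbb{C}^3$ indexes species and $W$ is the auxiliary space indexed by $\mathbb{Z}_{\geq 0}$, I would check by direct entry-wise computation that there is a spectral $R$-matrix $\check R(x/y)$ of Hecke type (with parameter $t$) satisfying
\[
\check R_{12}(x/y)\, \vec A_1(x)\, \vec A_2(y) \;=\; \vec A_1(y)\, \vec A_2(x)\, \check R_{12}(x/y).
\]
In parallel I would verify the cyclic/shift property of $S$, namely that inside the trace the operator $S$ implements a simultaneous cyclic rotation of factors together with the $q$-dilation $x_1 \mapsto q x_1$. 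Together these two ingredients say that $\tr[A_{\mu_1}(x_1)\cdots A_{\mu_n}(x_n) S]$ is a $\check R$-covariant function obeying the qKZ-type twisting condition.

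Second, once the ZF and shift relations are in place, the Cherednik--Dunkl operators $Y_i$ acting on polynomials in $x_1,\dots,x_n$ can be assembled from appropriate products of the $\check R_{i,i+1}(x_i/x_{i+1})$'s and the cyclic-plus-$q$-shift. Pushing these operations through the trace formula, I would show that $f_\mu$ is a common eigenfunction of the $Y_i$ with eigenvalues matching those of $E_\lambda$, where $\lambda$ is the partition obtained by sorting $\mu$. Uniqueness of joint eigenfunctions of the $Y_i$ then forces $f_\lambda = c_\lambda E_\lambda$ for a scalar $c_\lambda$. To compute $c_\lambda$ I would extract the leading monomial $x^\lambda$ from the trace, which by the block structure of the $A_i(x)$ reduces to a sum over diagonal entries in the auxiliary space $W$ indexed by $\mathbb{Z}_{\geq 0}$; this sum telescopes into the product defining $\Omega_\lambda^{-1}$. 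Matching with the normalization $[x^\lambda] E_\lambda = 1$ gives $c_\lambda = \Omega_\lambda^{-1}$, which is exactly the claimed formula.

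The main obstacle is the verification of the ZF exchange relation on the semi-infinite auxiliary space $W$: one must correctly guess the $R$-matrix $\check R(u)$ and then check a matrix identity involving the $t^i$ factors in $A_1(x)_{i,i} = x t^i$ and $A_2(x)_{i,i+1} = x(1-t^i)$, where the interaction of the $t$-diagonals with $\check R$ is delicate. A secondary difficulty is pinning down the exact product $\Omega_\lambda = \prod_{i<j}(1 - q^{j-i} t^{\lambda'_i - \lambda'_j})$, since this arises only after a careful resummation of the geometric series over the infinite auxiliary index, and requires a clean statement about the interaction between $S$ and the diagonal parts of the $A_i$.
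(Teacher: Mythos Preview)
The paper does not prove this theorem; it is quoted verbatim from \cite{CGW} (Cantini--de~Gier--Wheeler), as indicated by the citation ``\cite[(16),(24), Lemma~3]{CGW}'' in the theorem header. Section~\ref{CGW} merely records the matrices $A_0(x),A_1(x),A_2(x),S$ and the statement of \cref{CGWTheorem} as background input for the proofs of \cref{main_result} and \cref{main_result2}; the paper's own work begins with \cref{lem:relations} and the recurrences in \cref{thm:ansatz_rec} and \cref{thm:tableau}, which take \cref{CGWTheorem} as a black box.

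That said, your sketch is a reasonable outline of how \cite{CGW} actually establishes the result: the ZF exchange relations for the $A_i(x)$'s (their equations (25)--(27)), the shift relation with $S$, the resulting qKZ/Cherednik eigenvalue structure, and a leading-term normalization. One point to be careful about: your proposal asserts $f_\lambda = c_\lambda E_\lambda$ for the dominant weight $\lambda$, but the footnote in the statement warns that for general compositions the $f_\mu$ and $E_\mu$ are related only by a triangular change of basis, so the argument that the joint $Y_i$-eigenspace is one-dimensional must explicitly use that $\lambda$ is a partition (anti-dominant in the appropriate convention). Also, the scalar $\Omega_\lambda$ here comes out directly from evaluating the geometric series $\sum_{i\ge 0}(qt^r)^i$ in the trace (cf.\ \cref{lem:AEword}) rather than from a separate ``telescoping'' step; for $\lambda\in\{0,1,2\}^n$ with $s=2$ this is the single factor $1-qt^{\lambda'_1-\lambda'_2}=1-qt^r$.
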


\section{The proofs of \cref{main_result} and \cref{main_result2}}

In this section we prove our main results.  We start by sketching 
an outline of the proofs.

\begin{enumerate}
	\item \label{1} We show that the matrices from 
\cref{defn:matrices} satisfy certain relations generalizing those of the Matrix Ansatz
\cref{eq:MA}, see
		\cref{lem:relations}.

	\item \label{2} We use the relations from \cref{lem:relations} to prove
		that traces of matrix products $\Mat(\mu)$ in $A_0$, $A_1$, $A_2$ satisfy
		a certain recurrence, see 
\cref{thm:ansatz_rec}.  This recurrence allows us to reduce the computation of
traces of matrix products in $A_0$, $A_1$, $A_2$, to the computation of 
traces of matrix products in $A_1$ and $A_2$.

\item \label{3} We show that the weight generating functions for tableaux
	$\Tab_{qtx}(\mu)$ satisfy an analogous recurrence, see 
\cref{thm:tableau}.

\item \label{4} We verify that the base cases (i.e. corresponding to words in 
	$1$'s and $2$'s) agree up to the scalar factor $(1-qt^r)$ where $\mu\in \States(k,r,\ell)$, see 
  \cref{lem:AEword} and 
\cref{lem:AE2}.  It follows that
		$\Tab_{qtx}(\mu) = (1-qt^r) \tr(\Mat(\mu))$.

	\item \label{5} 
Since \cref{lem:relations}
		generalizes the relations of \cref{eq:MA}, 
		\cref{4} and \cref{ansatz} imply that \cref{main_result} holds.

	\item \label{6} Using \cref{4}, it 
		follows that $\Tab_{qtx}(\mu)$ agrees with the quantity
		$f_\mu(x_1,\dots,x_n)$ from \cref{eq:f}, up to normalization.
		
	\item \label{7} To verify that \cref{main_result2} is true
		(i.e. we are getting the actual Macdonald polynomials 
		$E_{\lambda}$ and $P_{\lambda}$ as opposed to scalar 
		multiples of them), we 
		can check the coefficient of $x_{\lambda}$ in $\Tab_{qtx}(\lambda)$
		when $\lambda = 2^{\ell} 1^r 0^k$.
		 There is a unique CRT of type $2^\ell 1^r 0^k$ with 
		 $x$-weight equal to  $x_{\lambda}$; this is the CRT with no 
		 arrows, so its weight is just $x_{\lambda}$.  Similarly, 
		one can check that the coefficient of $x_{\lambda}$ 
		in $E_{\lambda}$ is also 1, for instance by using the formula of Haglund-Haiman-Loehr  \cite{HHL3} and verifying that there is a unique non-attacking
		filling with $x$-weight $x_{\lambda}$.

\end{enumerate}

\subsection{Relations among the matrices from 
\cref{defn:matrices}}

The following lemma gives some relations among the matrices.  Note that \eqref{first} and \eqref{fourth} below
are special cases of \cite[(25) and (27)]{CGW}.  Meanwhile \eqref{second} and \eqref{third} appear somewhat related
to \cite[(26)]{CGW} but are not equivalent to it.

  \begin{lemma}\label{lem:relations}
	  \begin{eqnarray}
		  A_0(x)A_0(y)&=&A_0(y)A_0(x) \label{first} \\
		  A_0(x)A_2(y)&=&tA_2(y)A_0(x)+(1-t)A_2(y)+xy(1-t)A_0(y) \label{second}\\
		  A_0(x)A_1(y)&=&tA_1(y)A_0(x)+(1-t)A_1(y)\label{third}\\
		  A_0(x)S&=&SA_0(qx) \label{fourth}
	  \end{eqnarray}
  \end{lemma}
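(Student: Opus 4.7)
The plan is a direct entry-wise verification of each of the four identities, using the explicit descriptions in \cref{defn:matrices}. The four matrices involved are very sparse: $A_0(x)$ is upper bidiagonal (diagonal entry $1$, superdiagonal entry $x$), $A_2(y)$ is lower bidiagonal (diagonal entry $y^2$, subdiagonal entry $y(1-t^i)$), and $A_1(y)$ and $S$ are diagonal. Consequently, every product appearing in the lemma has at most three nonzero diagonals (the main diagonal, the subdiagonal at position $(i,i-1)$, and the superdiagonal at position $(i,i+1)$), so in each identity it suffices to check agreement at $(i,i)$, $(i,i-1)$, and $(i,i+1)$ for general $i\geq 0$ (with the usual convention that entries with a negative index vanish).

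First I would handle the easy identities \eqref{first} and \eqref{fourth}. For \eqref{first}, expanding $(A_0(x)A_0(y))_{i,j}=\sum_k A_0(x)_{i,k}A_0(y)_{k,j}$ gives a sum over $k\in\{i,i+1\}$ and yields $1$ on the diagonal, $x+y$ on the superdiagonal, and $xy$ at $(i,i+2)$, which is symmetric in $x$ and $y$. For \eqref{fourth}, since $S$ is diagonal, $(A_0(x)S)_{i,j}=A_0(x)_{i,j}q^j$, and a one-line check against $(SA_0(qx))_{i,j}=q^i A_0(qx)_{i,j}$ shows the two agree on both nonzero diagonals because $q^{i+1}x = q^i\cdot qx$.

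Next I would do \eqref{third}, which uses the diagonality of $A_1(y)$: $(A_0(x)A_1(y))_{i,j}=A_0(x)_{i,j}\,yt^j$ has diagonal entry $yt^i$ and $(i,i+1)$ entry $xyt^{i+1}$, while $t(A_1(y)A_0(x))_{i,j}+(1-t)(A_1(y))_{i,j}=tyt^i A_0(x)_{i,j}+(1-t)yt^i\delta_{i,j}$ has diagonal entry $tyt^i+(1-t)yt^i=yt^i$ and $(i,i+1)$ entry $txyt^i=xyt^{i+1}$, matching.

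The main work is \eqref{second}, which is the only identity involving a nontrivial interaction between two tridiagonal matrices, and is where the factor $xy(1-t)A_0(y)$ on the right-hand side is essential. On the left, $(A_0(x)A_2(y))_{i,j} = A_2(y)_{i,j} + xA_2(y)_{i+1,j}$ gives entries $y(1-t^i)$ at $(i,i-1)$, $y^2+xy(1-t^{i+1})$ at $(i,i)$, and $xy^2$ at $(i,i+1)$. On the right, I would compute $A_2(y)A_0(x)$ similarly, then add the two correction terms and simplify: the $(i,i-1)$ entry becomes $[t+(1-t)]y(1-t^i) = y(1-t^i)$; the $(i,i)$ entry collapses as $txy(1-t^i)+ty^2+(1-t)y^2+xy(1-t) = y^2+xy(1-t^{i+1})$ after cancelling the $txy$ terms; and the $(i,i+1)$ entry is $txy^2+xy^2(1-t)=xy^2$. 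Since each matches the left-hand side, the identity holds. No step involves any real obstacle beyond careful bookkeeping of the $t^i$ factors, which is why I would write out the $(i,i)$ case for \eqref{second} in full and leave the other cases to the reader.
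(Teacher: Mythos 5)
Your proposal is correct and follows essentially the same route as the paper: a direct entry-wise computation of each product using the sparsity of the matrices, with the only substantive check being the three nonzero diagonals of \eqref{second} and the diagonal of \eqref{third}. The paper dispatches \eqref{first} and \eqref{fourth} as immediate and writes out only \eqref{second} and \eqref{third}; your computations of those entries agree with the paper's (and your $(i,i+1)$ entry $xyt^{i+1}$ in \eqref{third} is in fact the correct value, where the paper has a harmless typo omitting the $y$).
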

  \begin{proof}
	  The proof is a series of simple calculations. It suffices to prove 
	  \eqref{second} and \eqref{third}.
	  To prove \eqref{second}, note that 
	  \[
	  (A_0(x)A_2(y))_{i,j}=\left\{ \begin{array}{ll}
		  y^2+xy(1-t^{i+1}) & {\rm if}\ i=j\\
		  xy^2 & {\rm if}\ i=j-1\\
		  y(1-t^i) & {\rm if}\ i=j+1\\
		  0 & {\rm otherwise}
	  \end{array}\right.
	  \] 
	  and also 
	  \begin{multline*}
	  (tA_2(y)A_0(x)+(1-t)A_2(y)+xy(1-t)A_0(y))_{i,j}=\\
	  \left\{ \begin{array}{ll}
		  t(y^2+xy(1-t^{i}))+y^2(1-t)+xy(1-t) & {\rm if}\ i=j\\
		  txy^2+xy(1-t)y & {\rm if}\ i=j-1\\
		  yt(1-t^i)+(1-t)y(1-t^i) & {\rm if}\ i=j+1\\
		  0 & {\rm otherwise}
	  \end{array}\right.
	  \end{multline*}
	  To prove \eqref{third}, note that
	  \[
	  (A_0(x)A_1(y))_{i,j}=\left\{ \begin{array}{ll}
		  yt^i & {\rm if}\ i=j\\
		  xt^{i+1} & {\rm if}\ i=j-1\\
		  0 & {\rm otherwise}
	  \end{array}\right.
	  \]
	  and also 
	  \[
	  (tA_1(y)A_0(x)+(1-t)A_1(y))_{i,j}=\left\{ \begin{array}{ll}
		  yt^{i+1}+(1-t)yt^i & {\rm if}\ i=j\\
		  xt^{i+1} & {\rm if}\ i=j-1\\
		  0 & {\rm otherwise}
	  \end{array}\right.
	  \]
  \end{proof}

\subsection{The recurrence for matrix products}

In this section we give a recurrence for traces of certain matrix products.
We start by verifying a base case.

  \begin{lemma}\label{lem:AEword}
	  Let $\mu\in \{1,2\}^n$ be a composition with $n$ parts which has $2$'s precisely in positions 
$(h_1,\ldots h_\ell)$, and let $W_1 \dots W_n$ be the corresponding matrix product,
with $A_2$'s in positions $h_1,\dots, h_\ell$ and $A_1$'s elsewhere.  Let $r=n-\ell$.
	  Then
	  \[
	  {\tr}(W_1(x_1)\ldots W_n(x_n)S)=\frac{x_1\ldots x_n\prod_{j=1}^\ell x_{h_j}}{1-qt^r}. 
	  \]
  \end{lemma}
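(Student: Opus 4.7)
The plan is to compute the trace directly by exploiting the sparsity structure of the matrices in \cref{defn:matrices}. Observe that $A_1(x)$ and $S$ are diagonal, while $A_2(x)$ is lower bidiagonal with nonzero entries only at positions $(i,i)$ and $(i,i-1)$. This means that in the product $W_1(x_1) \cdots W_n(x_n) S$, the column index can only stay the same (at each $A_1$ or at the diagonal term of each $A_2$) or decrease by one (at the subdiagonal term of each $A_2$). It can never increase.

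First I would write out the $(k,k)$ diagonal entry as a sum
\[
(W_1(x_1)\cdots W_n(x_n) S)_{k,k} \;=\; \sum_{j_1,\dots,j_{n-1}} (W_1)_{k,j_1} (W_2)_{j_1,j_2}\cdots (W_n)_{j_{n-1}, k}\, q^{k},
\]
where the final index is forced to be $k$ because $S$ is diagonal. Since $A_1$'s preserve the index and $A_2$'s weakly decrease it, and since the index must return from $k$ back to $k$, every step must preserve the index. In particular, every $A_2(x_{h_j})$ in the product must contribute its diagonal entry $x_{h_j}^2$, while every $A_1(x_i)$ contributes $x_i t^k$.

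Multiplying these contributions together with $q^k$ from $S$, the $(k,k)$ entry equals
\[
\Bigl(\prod_{j=1}^{\ell} x_{h_j}^2\Bigr) \Bigl(\prod_{i\notin\{h_1,\dots,h_\ell\}} x_i\Bigr) \, t^{kr} q^{k},
\]
using that there are exactly $r = n-\ell$ positions holding an $A_1$. Summing the geometric series $\sum_{k\geq 0} (qt^r)^k = 1/(1-qt^r)$ and using the identity $\prod_{j=1}^\ell x_{h_j}^2 \cdot \prod_{i \notin \{h_j\}} x_i = x_1 \cdots x_n \cdot \prod_{j=1}^\ell x_{h_j}$ yields the claimed formula.

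There is essentially no obstacle here beyond careful bookkeeping of row/column indices and the observation that since $A_2$ is lower triangular (never raising the row index) and $A_1$, $S$ are diagonal, every $A_2$ must contribute only its diagonal term to the trace. The factor $1/(1-qt^r)$ arises naturally as a formal geometric sum indexed by the diagonal position, consistent with the semi-infinite nature of the matrices.
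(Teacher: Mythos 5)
Your proof is correct and is essentially the same as the paper's: the paper states the key fact as a one-step recursion on diagonal entries ($(W_1\cdots W_nS)_{i,i}=x_1^2(W_2\cdots W_nS)_{i,i}$ or $x_1t^i(W_2\cdots W_nS)_{i,i}$), which is exactly your observation that only the diagonal terms of each factor can contribute to a closed index path. Both arguments then conclude by summing the geometric series $\sum_{k\ge 0}(qt^r)^k$.
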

  \begin{proof}
	  One can easily check that for $n\ge 1$
	  \[
	  (W_1(x_1)\ldots W_n(x_n)S)_{i,i}=\left\{\begin{array}{ll}
		  x_1^2(W_2(x_2)\ldots W_n(x_n)S)_{i,i}& {\rm if} \ W_1=A_2\\
		  x_1t^i(W_2(x_2)\ldots W_n(x_n)S)_{i,i}& {\rm if} \ W_1=A_1\\
	  \end{array}\right.
	  \]
	  and $S_{i,i}=q^i$.
	  Therefore
	  \begin{eqnarray*}
		  {\tr}(W_1(x_1)\ldots W_n(x_n)S)&=&\sum_i (W_1(x_1)\ldots W_n(x_n))_{i,i}\\
		  &=&\sum_i q^it^{ri}x_1\ldots x_n\prod_{j=1}^\ell x_{h_j}
	  \end{eqnarray*}
  \end{proof}

To state the recurrence, we need some notation.  

\begin{defn}
Given $\mu\in \{0,1,2\}^*$ a word of length $n$ 
in $\States(k,r,\ell)$, we let $\hMat(\mu)$ denote the product of 
matrices obtained from $\mu$ by substituting a $A_0(x_i)$ (respectively
$A_1(x_i)$, $A_2(x_i)$) for 
each $0$, $1$, or $2$ in the $i$th position of $\mu$, and followed by S.
For example, if 
$\mu = 012201$, then 
$\hMat(\mu) = A_0(x_1) A_1(x_2) A_2(x_3) A_2(x_4) A_0(x_5) A_1(x_6)S$.

For $J\subseteq[n]$ and $\mu$ a 
word of length $n$, we let $\hMat(\mu)|_J$ be the subword of 
$\hMat(\mu)$ obtained by restricting to positions $J$.  
For example, 
if $\mu = 012201$, then 
$\hMat(\mu)|_{\{2,4,5\}} = 
 A_1(x_2)  A_2(x_4) A_0(x_5)S$.
\end{defn}

\begin{defn}
Given  $\mu\in \{0,1,2\}^*$,
let $\EE(\mu)=\{i \ \vert \ \mu_i=2\}$ and let 
 $\EA(\mu)=\{i \ \vert \ \mu_i=1 \text{ or }2\}$. 
Given a partial permutation $\sigma \in \Sym_{I, \EA(\mu)}$,
and a choice of $d\in [n]$ such that $\mu_d=0$, 
we define $\tsigma$ to be the sequence obtained from $\sigma$ by 
inserting a $0$ into $\sigma$ in the position that represents the relative position of $\mu_d$ in 
$\EA(\mu)$.  For example, set $\mu = 0121021$ and $d=5$.  Then $\EA(\mu) = \{2,3,4,6,7\}$.
If we choose $I = \{3,6\} \subset \EE(\mu)$, and $\sigma = *\ 2\ *\ 1\ *\ \in \Sym_{I,\EA(\mu)}$, then
$\tsigma = *\ 2\ *\ 0\ 1\ *$. If $I=\emptyset$, we define $\tsigma^{-1}(0)=d$.
\end{defn}

Given $I = \{i_1, i_2,\dots, i_m\} \subset [n]$, we let 
$x_I$ denote $x_{i_1} \dots x_{i_m}$.

\begin{thm}\label{thm:ansatz_rec}
Consider the matrices $A_0(x), A_1(x), A_2(x)$ from \cref{CGW}, 
and let $\mu\in\States(k,r,\ell)$ with $n=k+r+\ell$,
where $r \geq 1$. Suppose $t<1$.
Let $d\in[n]$ be such that $\mu_d=0$.  Then 
we have that $\tr(\Mat(\mu))$ is equal to 
\begin{equation}\label{mat:recurrence}
\sum_{I\subseteq \EE(\mu)} 
	\frac{ [r+\ell-|I|]_{qt}!}{[r+\ell]_{qt}!} (x_I)^2 x_d \tr(\Mat(\mu)|_{[n]\backslash I \cup \{d\}}) 
	\sum_{\sigma\in \Sym_{I,\EA(\mu)}} \frac{t^{\dis(\tsigma)}q^{\rec(\tsigma)}}{x_{\tsigma^{-1}(|I|)}}.
\end{equation}

 \end{thm}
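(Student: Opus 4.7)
The proof rests on iteratively applying the commutation relations of \cref{lem:relations} to push the factor $A_0(x_d)$ through the cyclic matrix product $\Mat(\mu)$. By cyclicity of the trace we bring $A_0(x_d)$ to the left end of the product and push it rightward one matrix at a time. When $A_0$ meets another $A_0$ it commutes freely by \eqref{first}. When it meets $A_1(x_j)$, relation \eqref{third} produces a pass-through term (coefficient $t$) and a killed term (coefficient $1-t$, in which $A_0$ is removed). When it meets $A_2(x_j)$, relation \eqref{second} produces three terms: pass-through ($t$), killed ($1-t$), and a swap-absorbed term in which $A_2$ is removed and $A_0$'s argument changes from $x$ to $x_j$ with coefficient $xx_j(1-t)$. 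Finally, \eqref{fourth} lets $A_0(x)$ cross $S$ at the cost of replacing $x$ by $qx$, after which cyclicity brings $A_0$ back to the left end so that the process may continue.

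Each term of the full expansion is indexed by a trajectory of $A_0$: the subset $I\subseteq\EE(\mu)$ of 2's that were swap-absorbed, an ordering of $I$ encoded by a partial permutation $\sigma\in\Sym_{I,\EA(\mu)}$ (label $k$ marks the $k$-th swap position), and the number of full loops $A_0$ makes between consecutive swaps (and before it is finally killed). The residual matrix product is exactly $\Mat(\mu)|_{[n]\setminus(I\cup\{d\})}$, which gives the trace factor in the recurrence. The $x$-monomial accumulated from the $|I|$ swap-absorptions is $x_d\cdot x_{i_1}^2\cdots x_{i_{m-1}}^2 x_{i_m}=(x_I)^2 x_d/x_{\sigma^{-1}(|I|)}$, where $i_k=\sigma^{-1}(k)$. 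The $q$-weight counts $S$-crossings between swaps, which coincides with $\rec(\tsigma)$ under the reading order that starts at the inserted $0$ (position $d$) and proceeds $0\to 1\to 2\to\cdots\to|I|$. The $t$-weight counts pass-through events during each segment: stars correspond to un-absorbed 1's or 2's and labels greater than $k$ correspond to 2's not yet absorbed at stage $k$, so the contribution from the journey between the $(k-1)$-th and $k$-th swaps is exactly $\dist_k(\tsigma)$, summing to $\dis(\tsigma)$. Between the $(k-1)$-th and $k$-th swaps, each additional full loop contributes $t^{r+\ell-k+1}q$; summing the geometric series and combining with the $(1-t)$ factor from the $k$-th swap yields $1/[r+\ell-k+1]_{qt}$. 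After the last swap, the sum over positions and loop counts at which $A_0$ is killed telescopes to an overall coefficient of $1$. Taking the product over $k=1,\ldots,|I|$ produces the global prefactor $[r+\ell-|I|]_{qt}!/[r+\ell]_{qt}!$.

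The principal obstacle is the combinatorial bookkeeping: verifying that when one collects all trajectories with a given pair $(I,\sigma)$, the accumulated $x$-, $q$-, and $t$-weights match the statistics in \cref{def:disorder1} exactly, with the correct dependence on the cyclic ordering and on the starting point $d$ (represented by the inserted $0$ in $\tsigma$). One must align the cyclic direction of $A_0$'s motion with the reading order $0\to 1\to 2\to\cdots$ of $\tsigma$, and check carefully that a recoil in $\tsigma$ corresponds bijectively to a wrap around the cylinder through $S$. The hypothesis $t<1$ (or equivalently, a formal power-series interpretation in $t$) guarantees the geometric series over loop counts converges. Once these identifications are made, collecting terms of the expansion yields \eqref{mat:recurrence}.
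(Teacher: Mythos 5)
Your proposal is correct and follows essentially the same route as the paper's proof: mark the $A_0(x_d)$, push it rightward through the cyclic product via the relations of \cref{lem:relations}, index the resulting terms by the set $I$ of absorbed $A_2$'s, the absorption order $\sigma$, and the loop counts, then sum the geometric series in $qt^{r+\ell-k+1}$ to produce the $[\,\cdot\,]_{qt}$ denominators and identify the accumulated $t$- and $q$-exponents with $\dis(\tsigma)$ and $\rec(\tsigma)$. The paper handles the same final bookkeeping you describe, including the telescoping to $1$ of the post-absorption kill terms and the vanishing (for $t<1$) of the term in which $A_0$ is never killed.
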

 
 Note that by \cref{def:tsigma} we have $x_{\tsigma^{-1}(0)}=x_d$, and so $I=\emptyset$ gives the term $\tr(\Mat(\mu)|_{[n]\backslash \{d\}})$ in the above.

\begin{example} If $\mu=0212$ (so that $k=1$, $r=1$, $\ell=2$), and $d=1$, then 
\cref{thm:ansatz_rec} says that 
\begin{align*}
\hspace{4em}&\hspace{-4em}\tr(A_0(x_1) A_2(x_2) A_1(x_3) A_2(x_4)S)= 
\tr(A_2(x_2) A_1(x_3) A_2(x_4)S)\\
& + 
[r+\ell]_{qt}( \tr( A_1(x_3) A_2(x_4)S) x_1x_2 t^{\dis(1**)} 
 + \tr(A_2(x_2) A_1(x_3)S) x_1x_4 t^{\dis(**1)})\\
&+
[r+\ell]_{qt}[r+\ell-1]_{qt} \tr(A_1(x_3)S) (x_1x_2 t^{\dis(1*2)} + x_1x_4 q t^{\dis(2*1)}).
\end{align*}
\end{example}

\begin{proof}
In the expression $\Mat(\mu)$, we replace the $A_0$ in position $d$ by a $\tilde{A_0}$ so that 
we can keep track of this 
``marked" $A_0$.  
Without loss of generality, using the fact that 
$\tr(M_1 \dots M_n) = \tr(M_2 \dots M_n M_1)$, we can assume 
that $d=1$.

Using \eqref{eq:MA}, we will 
apply the operations below (and only these ones)
to $\tr(\Mat(\mu))$ until $\tilde{A_0}$ is annihilated in every term on the right-hand side.
\begin{align}
\tilde{A_0}(x)A_2(y) &= tA_2(y)\tilde{A_0}(x)+xy(1-t)\tilde{A_0}(y)+(1-t)A_2(y) \label{a:1} \\
\tilde{A_0}(x)A_1(y) &= tA_1(y)\tilde{A_0}(x)+(1-t)A_1(y) \label{a:2}\\
\tilde{A_0}(x)A_0(y) &= A_0(y)\tilde{A_0}(x) \label{a:3}\\
\tilde{A_0}(x)S &= S \tilde{A_0}(qx). \label{a:4}
\end{align}

More specifically, we think of \eqref{a:1} as giving us the choice of either 
moving the $\tilde{A_0}$ to the right past an $A_2$ (picking up a factor of 
$t$), or annihilating an $A_2$ or annihilating 
the $\tilde{A_0}$ (in each case picking up a factor of $(1-t)$). 
Similarly, \eqref{a:2} gives us the choice of moving the $\tilde{A_0}$
to the right past an $A_1$ picking up a factor of $t$, or annihilating the $\tilde{A_0}$ and picking up a factor of $(1-t)$.  \eqref{a:3} allows us to 
move the $\tilde{A_0}$ to the right past a $A_0$, and, if we have moved the $\tilde{A_0}$
to the end of the word, \eqref{a:4} allows us to move it back to the beginning.

After applying \eqref{a:1} through \eqref{a:4} as long as possible, we will  be
left with terms obtained from $\tr(\Mat(\mu))$ by:
\begin{itemize}
\item deleting some subset $I\subseteq\EE(\mu)$ of the $A_2$'s, having 
chosen a certain order $\sigma \in \Sym_{I, \EA(\mu)}$ in which to delete them
\item either deleting or not deleting the $\tilde{A_0}$; in the latter case, that means 
 that we wind up commuting the $\tilde{A_0}$ past all the remaining $A_1$ and $A_2$ letters of $\mu|_{[n]\backslash I}$ 
 infinitely many times.
\end{itemize}

We obtain that $\tr(\Mat(\mu))$ is equal to the following:
\begin{multline} \label{eq_red} 
 \sum_{I\subseteq \EE(\mu)} (x_I)^2 x_d\\ 
\cdot \Bigg[ \sum_{\sigma\in \Sym_{I,\EA(\mu)}} \frac{1}{x_{\tsigma^{-1}(|I|)}}
 \prod_{i=0}^{|I|-1}(1-t)(1+q t^{r+\ell-i}+q^2 t^{2(r+\ell-i)}+\cdots)
t^{\dist_{i+1}(\tsigma)} q^{\cyc_{i+1}(\tsigma)}   \Bigg]
\end{multline}
\begin{multline} \label{eq_blue}
\cdot \Bigg[  \sum_{m=1}^{r+\ell-|I|} 
(1-t)(1+t^{r+\ell-|I|}+t^{2(r+\ell-|I|)}+\cdots)t^{m-1}  \tr(\Mat(\mu)|_{[n]\backslash I \cup \{d\}})
\\
+  \lim_{j \to \infty} t^{j(r+\ell-|I|)} \tr(\Mat(\mu)|_{[n]\backslash I})(x_d \to q^jx_d)  \Bigg]. 
\end{multline}
We use the notation $\delta_{(*)}$ to represent the Kronecker delta, which returns 1 if $(*)$ is true and 0 otherwise, and $\cyc_{i+1}(\tsigma)=\delta_{(\tsigma^{-1}(i+1)<\tsigma^{-1}(i))}$.

Let us explain  the factor in \eqref{eq_red}: 
this is the factor we pick up in deleting the chosen (possibly empty)
set $I\subseteq\EE(\mu)$ of $A_2$'s. If $I=\emptyset$, we simply get 1. Otherwise,
suppose we delete $|I|=s>0$ $A_2$'s, in the order and positions specified
by $\sigma\in\Sym_{I,\EA(\mu)}$. Let $u_i=\tsigma^{-1}(i)$ be the label of the $i$'th $A_2$ to be deleted.
We start by deleting the $A_2$ with label $u_1$: to do so,
we first  commute the $\tilde{A_0}$ 
past all letters of the word $\mu$ a total of $j_1$ times (where $j_1 \geq 0$),
thus picking up a factor of $t^{j_1(r+\ell)}$ with $\tilde{A_0}(x)$ becoming $\tilde{A_0}(q^{j_1}x)$; we then apply some number
$m < r+\ell$ of commutations to bring the $\tilde{A_0}$ adjacent to 
this $A_2$. Note that $m=\dist_1(\tsigma)$, and 
 so we pick up a factor of $t^{\dist_1(\tsigma)}$ with $\tilde{A_0}(x)$ becoming $\tilde{A_0}(q^{\cyc_1(\tsigma)}x)$.
  We then delete the $A_2$ with label $u_{1}$, picking up a factor of $x_{u_{1}} x_{d} q^{j_1}q^{\cyc_1(\tsigma)}(1-t)$.
Similarly, to delete the $A_2$ with label $u_2$, we commute the $\tilde{A_0}$
past all remaining letters of the word $\mu$ a total of $j_2\geq 0$ times,
picking up $t^{j_2(r+\ell-1)}$ and with $\tilde{A_0}(x)$ becoming $\tilde{A_0}(q^{j_2}x)$, then apply $\dist_2(\tsigma)$ commutations to 
move the $\tilde{A_0}$ from position $u_1$ to $u_2$ with $\tilde{A_0}(x)$ becoming $\tilde{A_0}(q^{\cyc_2(\tsigma)}x)$.
 We then delete
that $A_2$, picking up a factor of $x_{u_{2}} x_{u_1} q^{j_2}q^{\cyc_2(\tsigma)}(1-t)$.  We continue in this fashion until the last $A_2$, which has label $u_s$: when this is deleted, we pick up a factor of $x_{u_s} x_{u_{s-1}} q^{j_{s}}q^{\cyc_{s}(\tsigma)}(1-t)$. Thus the overall contribution of the $x$'s is $\frac{(x_I)^2x_d}{x_{\tsigma^{-1}(|I|)}}$, which is how we obtain the factor in \eqref{eq_red}.  

After annihilating the chosen $A_2$'s, we then either delete
the $\tilde{A_0}$, or we don't. If we do delete the $\tilde{A_0}$, we obtain the sum in the first line of \eqref{eq_blue}.  Again we possibly cycle the $\tilde{A_0}$ through all
the remaining  $r+\ell-|I|$ $A_1$ and $A_2$ letters 
of $\mu$ $j$ times, then commute it past $m$
more letters, where $0 \leq m \leq r+\ell-|I|-1$. Note that here, even though the $\tilde{A_0}(x)$ does become $\tilde{A_0}(q^jx)$, there are no further components arising from the term $xy(1-t)\tilde{A_0}(y)$ in \eqref{a:2}; thus the variable that the $\tilde{A_0}$ carries never enters into the equation, and so no $q$'s are collected in the final expression.

If we don't ultimately delete the $\tilde{A_0}$, then 
we necessarily cycle the 
$\tilde{A_0}$ around the remaining letters of $\mu$ indefinitely, resulting
in the term 
$ \lim_{j \to \infty} t^{j(r+\ell-s)} \tr(\Mat(\mu)|_{[n]\backslash I})(x_{d} \to q^jx_d),$ 
where the notation $x_d \to q^j x_d$ means that we substitute $q^j x_d$ for $x_d$ in $\tilde{A_0}$; this is the second line of \eqref{eq_blue}.

Now we can simplify the terms within the sums obtained above.
Since $t<1$, the terms involving the limit go to $0$.
In the top line of \eqref{eq_blue}, we have that 
 $\sum_{m=1}^{r+\ell-s} (1-t)(1+t^{r+\ell-s}+\dots)t^{m-1}$ is equal to $1$.
To simplify the bottom line of \eqref{eq_red},
we recall that  $\sum_{i=0}^{|I|-1} \dist_{i+1}(\tsigma) = 
\dis(\tsigma)$ and that by definition $\sum_{i=0}^{|I|-1} \cyc_{i+1}(\tsigma) = \rec(\tsigma)$.

We thus obtain the desired identity \eqref{mat:recurrence}.
\end{proof}

Observe that at $q=x_1=\cdots=x_n=1$, the recurrence \eqref{mat:recurrence} becomes
\begin{equation}
\tr(\Mat(\mu)) =  \sum_{I\subseteq \EE(\mu)} \frac{[r+\ell-|I|]!}{[r+\ell]!} \tr(\Mat(\mu)|_{[n]\backslash I\cup\{d\}}) \sum_{\sigma\in \Sym_{I,\EA(\mu)}} t^{\dis(\tsigma)}.
\end{equation}

\begin{remark} The case $k=0$ is trivial, since in this case 
	the stationary distribution of the  ASEP is uniform.  From 
	\cref{lem:AEword}  we obtain $\tr(\Mat(\mu))=\frac{1}{1-qt^n}x_{\EA(\mu)}x_{\EE(\mu)}$.
We also get the uniform distribution at $t=1$.
\end{remark}

\begin{remark}\label{rem:sufficient}
We won't actually need the full generality of \cref{thm:ansatz_rec}; it is enough to 
know \cref{thm:ansatz_rec} in the case that $d\in [n]$ is maximal such that $\mu_d=0$.
\end{remark}

\subsection{The recurrence for weight generating functions of tableaux}

We again start by giving a base case.
\begin{lemma}\label{lem:AE2}
Let $\mu\in \States(0,r,\ell)$ with $r+\ell=n$, 
i.e. it is a composition with parts equal to $1$ or $2$.  Let $h_1,\dots,h_\ell$ be the 
positions of the $2$'s.  Then we have 
\[
\Tab_{qtx}(\mu) = x_1\dots x_n \prod_{j=1}^{\ell} x_{h_j}.
\] 
\end{lemma}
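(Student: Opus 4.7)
The plan is to verify the claim by direct inspection of the definitions, observing that the combinatorial structure collapses entirely when $k=0$. Since $k=0$, the diagram $\mathcal{H}(\mu)$ is, by definition, just the path $P(\mu)$, with no $\mu$-strips stacked above it. In particular, $\mathcal{H}(\mu)$ contains no square or rhombic tiles. Consequently, $\CRT(\mu)$ consists of a single element: the (unique) tableau $T$ with no arrows. The only arrow ordering $\{\sigma^i\}$ of $T$ is the empty one (there are no rows), and immediately from the definitions $\arr(T)=0$, $\dis(T,\{\sigma^i\})=0$, and $\cyc(T,\{\sigma^i\})=0$. Therefore the prefactor
\[
\frac{[r+\ell-\arr(T)]_{qt}!}{[r+\ell]_{qt}!} = 1,
\]
and the $t$- and $q$-factors $t^{\dis}q^{\cyc}$ both evaluate to $1$.

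The substantive step is then to evaluate $\wt_x(T,\{\sigma^i\})$ from \cref{xweight_def}. Since $T$ has no arrows, $\prod_a \wt(a) = 1$. Because $\mu\in\{1,2\}^n$, we have $\EA(\mu)=[n]$, and so $\wt_{\EA}(\mu) = x_1 x_2 \cdots x_n$. Finally, $\mathcal{H}(\mu)$ has exactly $\ell$ square columns, whose labels, under the labeling of $P(\mu)$ from northeast to southwest, are precisely the positions $h_1,\dots,h_\ell$ of the $2$'s in $\mu$. Each such column contains no squares and therefore no arrows, so the rule in \cref{xweight_def} assigns it weight $x_{h_j}$. Multiplying these contributions gives
\[
\wt_{qtx}(T) = \wt_x(T,\{\sigma^i\}) = x_1 \cdots x_n \prod_{j=1}^{\ell} x_{h_j},
\]
and since $T$ is the unique tableau in $\CRT(\mu)$, this is also $\Tab_{qtx}(\mu)$.

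The only conceptual point that might require care is the interpretation of the $\ell$ formally empty square columns: when $k=0$ a square column has zero squares, but it is still a column in $\mathcal{H}(\mu)$ that happens to contain no arrows, and the weight rule applies to it accordingly. Once this is granted, no further obstacle arises; the lemma is exactly the degenerate base case in which the combinatorial sum reduces to a single monomial whose $x$-factors come entirely from $\wt_{\EA}(\mu)$ together with the contributions of the $\ell$ empty square columns.
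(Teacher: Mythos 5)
Your proposal is correct and matches the paper's approach: the paper simply states that the lemma ``follows directly from the definitions and in particular \cref{weight_def},'' and your argument is exactly the spelled-out version of that verification. You also correctly identify the one delicate point --- that the $\ell$ degenerate square columns (with zero tiles when $k=0$) still each contribute a factor $x_{h_j}$ as arrowless columns, which is where the product $\prod_j x_{h_j}$ comes from.
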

\begin{proof}
\cref{lem:AE2} follows directly from 
the definitions and in particular \cref{weight_def}.
\end{proof}

In what follows, if $\mu\in \States(k,r,\ell)$ with $k+r+\ell=n$, and if $J\subset [n]$,
then we let $\Tab_{qtx}(\mu|_J)$ be the weight generating function for cylindric rhombic tableaux
of type $\mu|_J$ obtained if we label the path $P(\mu)$ in each tableau using the numbers $J$.
(So that the $x$-weight of each tableau is a monomial in $x_{j}$'s for $j\in J$.)

\begin{thm}\label{thm:tableau}
Let $\mu\in\States(k,r,\ell)$ with $n=k+r+\ell$. Let $d\in[n]$ be maximal such that $\mu_d=0$. 
Then 
$\Tab_{qtx}(\mu)$ equals
\begin{equation*}
 x_d \sum_{s=0}^{\ell} \frac{[r+\ell-s]!}{[r+\ell]!}\sum_{\substack{I\subseteq \EE(\mu)\\|I|=s}} x_I^2 \Tab_{qtx}(\mu|_{[n]\backslash I \cup \{d\}}) \sum_{\sigma^1\in \Sym_{I,\EA(\mu)}}\frac{t^{\dis(\tilde{\sigma}^1)}q^{\rec(\tsigma^1)}}{x_{(\tsigma^1)^{-1}(s)}}.
\end{equation*}
\end{thm}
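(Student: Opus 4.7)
The plan is to prove this recurrence by a combinatorial bijection that decomposes each CRT of type $\mu$ according to the structure of its bottom row. Since $d$ is the maximal index with $\mu_d=0$, the bottom row of $\mathcal{H}(\mu)$ is $\west(d)$. Given a pair $(T,\{\sigma^i\})$ with $T\in\CRT(\mu)$, write $I=I(T)\subseteq\EE(\mu)$ for the set of columns in which $\west(d)$ contains arrows, set $s=|I|$, and let $\sigma^1\in\Sym_{I,\EA(\mu)}$ encode the arrow-label data for $\west(d)$. Let $T'$ be the cylindric rhombic diagram obtained by deleting $\west(d)$ together with every square column indexed by an element of $I$, retaining the remaining arrows with the orderings $(\sigma^2,\dots,\sigma^k)$. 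By \cref{def:free}, every square tile in a column $j\in I$ sitting above $\west(d)$ is non-free (the $\west(d)$ arrow in column $j$ points upward at it), so by \cref{def:tsigma} these tiles are skipped when computing $\tsigma^i$ for $i\geq 2$; deleting them therefore does not alter $(\tsigma^2,\dots,\tsigma^k)$. Thus $T'\in\CRT(\mu|_{[n]\backslash(I\cup\{d\})})$, and the procedure is reversible, yielding a bijection
$$
\{(T,\{\sigma^i\})\colon T\in\CRT(\mu)\}\;\longleftrightarrow\;\bigsqcup_{I\subseteq\EE(\mu)}\Sym_{I,\EA(\mu)}\times\{(T',\{\sigma^i\}_{i\geq 2})\colon T'\in\CRT(\mu|_{[n]\backslash(I\cup\{d\})})\}.
$$

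I next check that the $qtx$-weight factors along this bijection. Invariance of $\tsigma^i$ for $i\geq 2$, together with the identity $\cyc(T,\sigma^i)=\rec(\tsigma^i)$ recorded before \cref{xweight_def}, gives
$$
\dis(T,\{\sigma^i\})=\dis(\tsigma^1)+\dis(T',\{\sigma^i\}_{i\geq 2}),\qquad \cyc(T,\{\sigma^i\})=\rec(\tsigma^1)+\cyc(T',\{\sigma^i\}_{i\geq 2}).
$$
For the $x$-weight, the three groups of factors in \cref{xweight_def} combine as follows: the prefactor satisfies $\wt_{\EA}(\mu)=x_I\cdot\wt_{\EA}(\mu|_{[n]\backslash(I\cup\{d\})})$; the bottom-row arrows contribute $x_d\cdot x_I/x_{(\tsigma^1)^{-1}(s)}$, since the arrow with maximum label $s$ contributes $x_d$ (the row label) while each of the other arrows in $I$ contributes its column variable; and the square columns indexed by $I$ each satisfy $\wt(c)=1$ in $T$ (they contain arrows) and disappear in $T'$, so the column contributions of $T$ and $T'$ agree. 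Multiplying these together yields
$$
\wt_x(T,\{\sigma^i\})=\frac{x_I^{\,2}\,x_d}{x_{(\tsigma^1)^{-1}(s)}}\cdot\wt_x(T',\{\sigma^i\}_{i\geq 2}).
$$

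Finally, $\mu|_{[n]\backslash(I\cup\{d\})}\in\States(k-1,r,\ell-s)$ and $\arr(T)=s+\arr(T')$, so the ratio of normalizing prefactors in \cref{weight_def} between $\wt_{qtx}(T)$ and $\wt_{qtx}(T')$ is precisely $[r+\ell-s]_{qt}!/[r+\ell]_{qt}!$. Summing $\wt_{qtx}(T,\{\sigma^i\})$ over all CRT of type $\mu$ and all arrow orderings, transporting through the bijection, and organizing the resulting sum by $s=|I|$ produces exactly the right-hand side of the claimed recurrence. The main obstacle is the verification in the first step that the upper-row partial permutations $\tsigma^i$ ($i\geq 2$) are genuinely invariant under deletion of the columns indexed by $I$; this hinges on the ``skip non-free square tiles'' convention in \cref{def:tsigma} combined with the observation that the relevant tiles in $T$ are non-free by virtue of the $\west(d)$ arrows pointing upward at them. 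Everything else is bookkeeping, and is precisely parallel on the combinatorial side to the matrix manipulations in the proof of \cref{thm:ansatz_rec}.
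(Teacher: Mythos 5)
Your proof is correct and follows essentially the same route as the paper: a weight-preserving bijection obtained by stripping off the bottom row together with the square columns containing its arrows, with the weight factoring into the $\sigma^1$-contribution $x_I^2 x_d\, t^{\dis(\tsigma^1)} q^{\rec(\tsigma^1)}/x_{(\tsigma^1)^{-1}(s)}$ times the weight of the reduced tableau and the ratio of normalizing prefactors. You in fact supply more detail than the paper does on the key point that the upper rows' partial permutations $\tsigma^i$ are unchanged under the column deletion (because the tiles above a bottom-row arrow are non-free and hence skipped in \cref{def:tsigma}), which the paper asserts without comment.
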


\begin{proof}
We prove that  \cref{weight_def} satisfies the recurrence using
a bijective proof.
Since $d\in[n]$ is maximal such that $\mu_d=0$, removing $\mu_d$ from $\mu_1 \dots \mu_n$
corresponds to deleting the bottom row $\west(1)$ from any $T\in \CRT(\mu)$. 

Choose some $T\in \CRT(\mu)$ with $k$ rows. 
 Suppose that $\west(1)$ contains $s_1$ up-arrows in columns with positions
corresponding to  $I\subseteq \EE(\mu)$. Define $\hat{T}$ to be the tableau with $k-1$ rows obtained by removing 
 $\west(1)$ as well as the $s_1$ columns corresponding to $I$, 
and then gluing together the remaining boxes in the obvious way. 

If $s_1=0$, $\hat{T}$ is simply the same tableau with row labeled $d$ removed, whose weight is $\frac{x_d}{x_{(\tsigma^1)^{-1}(0)}}\Tab_{qtx}(\mu|_{[n]\backslash\{d\}})$, which is simply $\Tab_{qtx}(\mu|_{[n]\backslash\{d\}})$. 

When $s_1\geq 1$, clearly
$\hat{T} \in \CRT(\mu|_{[n]\backslash I\cup\{d\}})$, and in fact the set of $T\in \CRT(\mu)$ with 
$s_1$ up-arrows in locations $I$ in $\west(1)$ maps bijectively to the set $\CRT(\mu|_{[n]\backslash I\cup\{d\}})$. 
Moreover if we choose an arrow ordering $\{\sigma^i\} = \{\sigma^1,\ldots,\sigma^k\}$ for $T$,
then this induces an arrow ordering $\{\hat{\sigma}^i\}=\{\sigma^2,\ldots,\sigma^k\}$ for $\hat{T}$, with the property that $\dis(\hat{\sigma}^i)=\dis(\sigma^{i+1})$ and $\cyc(\hat{\sigma}^i)=\cyc(\sigma^{i+1})$ for $i=1,\ldots,k-1$.
 Therefore 
$\sum_T \wt_{qtx}(T)$, where the sum is over $T\in\CRT(\mu)$ 
with $s_1$ arrows in locations $I$ of $\west(1)$, is equal to 
$x_d \Tab_{qtx}(\mu|_{[n]\backslash I\cup\{d\}})$ times the contribution of weights from all 
possible orderings $\sigma^1\in\Sym_{I,\EA(\mu)}$. 
By \cref{weight_def}, the possible choices of  arrow orderings 
contribute 
\[
\sum_{\sigma^1\in \Sym_{I,\EA(\mu)}} t^{\dis(\tsigma^1)}q^{\rec(\tsigma^1)} \frac{x_I^2}{x_{\tsigma^{-1}(s_1)}}
\] 
to the weight.
\end{proof}

\subsection{Symmetries of the tableaux}

The following statements can be proved using \cref{main_result} and the symmetries of the 
Markov chain $\2ASEP(k,r,\ell)$.  However, it is not obvious how to give a combinatorial proof 
using the tableaux.

\begin{problem}\label{conj:complement}
For any $\mu\in\States(k,r,\ell)$, 
define $\mu^{\odot}\in\States(\ell,r,k)$ to be the \emph{complement} of $\mu$, which is the word obtained by replacing each $0$ by 
a $2$ and vice-versa.
 For example, for $\mu=2210$, $\mu^{\odot}=0012$. Give a combinatorial proof that
\[
 \Tab_t(\mu)=t^{\ell(\ell+2r-1)/2}\Tab_{1/t}(\mu^{\odot}).
\]
(Note that $\frac{(\ell)(\ell+2r-1)}{2}$ is the degree of $\Tab_t(\mu)$.)
\end{problem}

\begin{problem}\label{conj:conjugate}
For any $\mu=\mu_1\dots \mu_n \in\States(k,r,\ell)$, 
 define $\mu^{T}:=\mu_n^{\odot}\ldots \mu_1^{\odot}$ to be the ``particle-hole symmetry" word.
For example, for $\mu=2210$, we have $\mu^{T}=2100$. Give a combinatorial proof that
\[
\Tab_t(\mu)[r+k]_t!=\Tab_t(\mu^T)[r+\ell]_t!
\]
\end{problem}

We next compute $\Tab_t(\mu)$ when $t=1$. 

\begin{cor}
Let $\mu\in\States(k,r,\ell)$ with $k+r+\ell=n$. Then
$\Tab_t(\mu)(t=1)={n \choose \ell}\frac{\ell!r!}{(r+\ell)!}$.
\end{cor}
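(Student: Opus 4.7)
The plan is to specialize the weight $\wt_t(T)$ at $t=1$ and then enumerate cylindric rhombic tableaux directly. At $t=1$, every $t$-integer $[i]$ becomes $i$ and $t^{\dis(T,\{\sigma^i\})}=1$, so the inner sum over arrow orderings in \cref{tweight_def} simply counts them. If row $i$ of $T$ contains $a_i$ arrows (so that $\arr(T)=s=\sum_{i=1}^k a_i$), there are $\prod_i a_i!$ arrow orderings, giving
$$\wt_t(T)\big|_{t=1}=\frac{(r+\ell-s)!}{(r+\ell)!}\prod_{i=1}^k a_i!.$$

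Next I would parametrize $\CRT(\mu)$: a CRT is determined by assigning each of the $\ell$ square columns one of $k+1$ labels -- either ``empty'' or ``arrow in row $i$'' for some $i\in[k]$ -- so the number of CRT of type $\mu$ with row distribution $(a_1,\ldots,a_k)$ is the multinomial coefficient $\binom{\ell}{a_1,\ldots,a_k,\ell-s}$. Note that both this count and the weight above are independent of $\mu$, matching the fact that at $t=1$ the two-species ASEP reduces to a symmetric exclusion process, whose stationary distribution is uniform.

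Combining these,
\begin{align*}
\Tab_t(\mu)\big|_{t=1}&=\sum_{s=0}^{\ell}\sum_{\substack{a_1,\ldots,a_k\geq 0\\ \sum a_i=s}}\binom{\ell}{a_1,\ldots,a_k,\ell-s}\cdot\frac{(r+\ell-s)!\prod_i a_i!}{(r+\ell)!}\\
&=\frac{\ell!}{(r+\ell)!}\sum_{s=0}^{\ell}\frac{(r+\ell-s)!}{(\ell-s)!}\binom{s+k-1}{k-1},
\end{align*}
since the factors $\prod_i a_i!$ cancel against the multinomial, leaving a stars-and-bars count of compositions of $s$ into $k$ nonnegative parts. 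Setting $j=\ell-s$ and rewriting $(r+j)!/j!=r!\binom{r+j}{j}$ converts this to $\frac{r!\,\ell!}{(r+\ell)!}\sum_{j=0}^{\ell}\binom{r+j}{j}\binom{k+\ell-j-1}{\ell-j}$, and the Chu--Vandermonde identity sums the latter to $\binom{r+k+\ell}{\ell}=\binom{n}{\ell}$, giving the claimed formula. The only place any insight is needed is in choosing the right parametrization of $\CRT(\mu)$ by per-column labels, which cleanly factors the weight sum by row distribution and reduces the computation to a standard hypergeometric identity.
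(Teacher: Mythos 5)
Your proposal is correct and follows essentially the same route as the paper: at $t=1$ the weight of a tableau with row distribution $(a_1,\dots,a_k)$ is $\frac{(r+\ell-s)!}{(r+\ell)!}\prod_i a_i!$, the tableaux are counted by the multinomial $\binom{\ell}{a_1,\dots,a_k,\ell-s}$ (the paper writes this as $\binom{\ell}{s}\binom{s}{s_1,\dots,s_k}$), the factorials cancel to leave a stars-and-bars factor $\binom{s+k-1}{k-1}$, and the final sum collapses via the same Vandermonde-type identity $\sum_s\binom{s+k-1}{k-1}\binom{r+\ell-s}{r}=\binom{n}{\ell}$. The only differences are cosmetic (your reindexing $j=\ell-s$ before applying the identity).
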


\begin{proof}
For $\mu\in\States(k,r,\ell)$, the 
\emph{$\mu$-diagram} $\mathcal{H}(\mu)$ has 
$k$ rows.   
Each row in $\mathcal{H}(\mu)$ contains  $\ell$ square tiles, each of which is either 
empty or contains an arrow.
Since we are setting $t=1$, we do not need to compute 
the disorder of any arrow placements, we simply need to determine
how many arrow placements and arrow orderings there are.

Suppose we are selecting an arrow placement for $\mathcal{H}(\mu)$.
We first choose the total number $s$ of arrows to place in the square tiles, where 
 $0\leq s\leq \ell$;
 there are ${\ell \choose s}$ choices for the $s$ columns that will contain these arrows. 
Let $s_1+\cdots+s_k=s$ be a composition representing the number of arrows placed in 
the rows $\west(1), \dots, \west(k)$. 
Once $s_1,\ldots,s_k$ are chosen (in ${s+k-1 \choose k-1}$ ways), 
there are ${s\choose s_1,\ldots,s_k}$ ways to select which arrows go in which rows, and $s_i!$ possible orderings of the arrows in $\west(i)$, for each $i\in\{1,\ldots,k\}$. 
Finally, given that $\arr(T)=s$, we have that the factor
	$\frac{[r+\ell-\arr(T)]!}{[r+\ell]!}$
in \cref{tweight_def} is equal to 
$\frac{(r+\ell-s)!}{(r+\ell)!}$. Thus we obtain
\begin{align*}
\Tab_t(\mu)(t=1)& = \sum_{0\leq s\leq\ell} \sum_{s_1+\cdots+s_k=s} {\ell \choose s} {s\choose s_1,\ldots,s_k}s_1!\ldots s_k!  \frac{(r+\ell-s)!}{(r+\ell)!} \\
& = \sum_{0\leq s\leq\ell} {s+k-1 \choose k-1}{\ell \choose s} \frac{s!(r+\ell-s)!}{(r+\ell)!}\\
&= \frac{r!\ell!}{(r+\ell)!}\sum_{0\leq s\leq \ell} {s+k-1 \choose k-1}{r+\ell-s \choose r} \\
&=\frac{r!\ell!}{(r+\ell)!}{k+r+\ell \choose \ell} = \frac{r!\ell!}{(r+\ell)!}{n \choose \ell}.
\end{align*}
\end{proof}


\section{A bijection from cylindric rhombic tableaux to two-line queues}

In this section we present a bijection between cylindric rhombic
tableaux and two-line queues which are equivalent to the multiline queues of  Martin \cite{Martin}.  

\begin{defn}
A two-line queue of size $n$ is a two rowed array $Q$ on a cylinder where the entries can be $\{ \tikzcircle{5pt},\tikzcircle[fill=black]{1pt}\}$ where $\tikzcircle{5pt}$ means that there is a ball
at the site and $\tikzcircle[fill=black]{1pt}$ means the site is empty.
There exists
a partial matching between the balls in the top row and the bottom row
such that:
\begin{itemize}
\item All the balls of the top row are matched
\item A ball in the bottom row is allowed to not be matched only if there is no ball in the same column in the top row.
\end{itemize}
\end{defn}
For each matching of a top row ball from column $i$ to a bottom row ball in column $j$, we draw an edge
from left to right, wrapping around if necessary. See \cref{exampl}. In this queue, the top row balls in columns 2, 3, 6, 8, 10 are matched with the bottom row balls in columns 2, 10, 9, 4, 5, respectively. 

The \emph{type} of a two-line queue is a word in $\{0,1,2\}^*$ which is read off the bottom row from left to right: an empty site is read as a 0, an unmatched ball is read as a 1, and a matched ball is read as a 2. The type of the queue in \cref{exampl} is 0212201022.

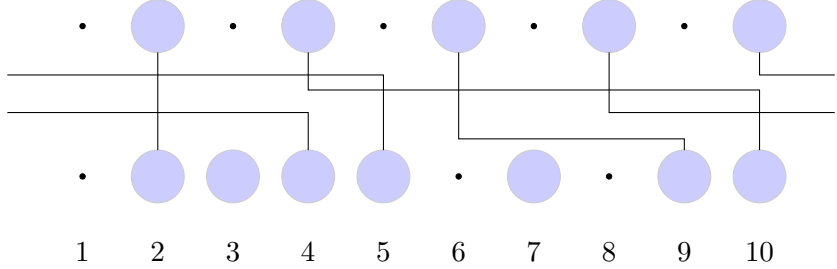
\begin{figure}
\begin{tikzpicture}
\draw[black,fill=black,radius=1pt](0,0) circle;
\draw[black,fill=blue, opacity=.2,radius=10pt](1,0) circle;
\draw[black,fill=blue, opacity=.2,radius=10pt](3,0) circle;
\draw[black,fill=black,radius=1pt](2,0) circle;
\draw[black,fill=black,radius=1pt](4,0) circle;
\draw[black,fill=blue, opacity=.2,radius=10pt](5,0) circle;\draw[black,fill=black,radius=1pt](6,0) circle;
\draw[black,fill=blue, opacity=.2,radius=10pt](7,0) circle;\draw[black,fill=black,radius=1pt](8,0) circle;
\draw[black,fill=blue, opacity=.2,radius=10pt](9,0) circle;

\draw[black,fill=black,radius=1pt](0,-2) circle;
\draw[black,fill=blue, opacity=.2,radius=10pt](1,-2) circle;

\draw[black,fill=blue, opacity=.2,radius=10pt](2,-2) circle;
\draw[black,fill=blue, opacity=.2,radius=10pt](3,-2) circle;
\draw[black,fill=blue, opacity=.2,radius=10pt](4,-2) circle;
\draw[black,fill=black,radius=1pt](5,-2) circle;
\draw[black,fill=blue, opacity=.2,radius=10pt](6,-2) circle;
\draw[black,fill=black,radius=1pt](7,-2) circle;
\draw[black,fill=blue, opacity=.2,radius=10pt](8,-2) circle;
\draw[black,fill=blue, opacity=.2,radius=10pt](9,-2) circle;

\draw (1,-.35)--(1,-1.65);
\draw (3,-.35)--(3,-.85)--(9,-.85)--(9,-1.65);
\draw (5,-.35)--(5,-1.5)--(8,-1.5)--(8,-1.65);
\draw (9,-.35)--(9,-.65)--(10,-.65);
\draw(-1,-.65)--(4,-.65)--(4,-1.65);
\draw (7,-.35)--(7,-1.15)--(10,-1.15);
\draw(-1,-1.15)--(3,-1.15)--(3,-1.65);

\node at (0,-3) {1};
\node at (1,-3) {2};
\node at (2,-3) {3};
\node at (3,-3) {4};
\node at (4,-3) {5};
\node at (5,-3) {6};
\node at (6,-3) {7};
\node at (7,-3) {8};
\node at (8,-3) {9};
\node at (9,-3) {10};

\end{tikzpicture}
\caption{An example of a two-line queue of type 0212201022.}
\label{exampl}
\end{figure}

To each queue, we associate a weight in $x_1,\ldots ,x_n,q,t$. Each ball in column $i$ has weight $x_i$. We also give a weight to the edges that connect balls in different columns.
We explore the queue with a simple algorithm. We call a ball \emph{restricted} if it has another ball besides itself in its column. 
\begin{enumerate}
\item At initialization, all bottom row balls are considered \emph{free}, and all balls are unmatched.
\item Let $i$ be the column containing the rightmost unrestricted top row ball that has not yet been matched. If there are no remaining unmatched unrestricted top row balls, we are done.
\item To compute the weight of a matching from the top row ball in column $i$, let {\rm free} be the number of free bottom row balls remaining at this point. Suppose the ball in column $i$ is matched to the bottom row ball in column $j$. Then {\rm skipped} is the number of free bottom row balls that are skipped over to get from column $i$ to column $j$ while moving to the right, wrapping around if necessary. The weight of that matching is 
\[
\frac{q^{\delta_{(i>j)}}t^{\rm skipped}}{[{\rm free}]_{qt}},
\]
where $\delta$ denotes the Kronecker delta. In other words,
\begin{itemize}
\item if $i< j$, the weight of that matching is $t^{\rm skipped}/[{\rm free}]_{qt}$ where {\rm skipped} is the number of free bottom row balls in columns $u$ such that $i \leq u<j$.
\item if $i>j$, the weight of that matching is 
$qt^{\rm skipped}/[{\rm free}]_{qt}$ where {\rm skipped} is the number of free bottom row balls in columns $u$ such that $u \geq i$ or $u<j$.
\end{itemize}
The bottom row ball in column $j$ that has been matched is now no longer free.
\item If there is a top row ball in column $j$, continue to Step 3, setting $i=j$. Otherwise, go to Step 2.
\end{enumerate}
Now the {\em weight of the two-line queue}
is the product of the weight of the edges times the weight of the balls.

For example, let us compute the weight of the queue in \cref{exampl}. We start with the top row ball in column 8. It is matched with the bottom row ball in column 4, by cycling around and skipping 4 free balls out of a total of 7 free balls. The weight of this edge is thus $qt^4/[7]_{qt}$. 

Since there is a top row ball in column 4, that is the next one we match. This ball is matched
with the bottom row ball in column 10, by skipping 3 free balls out of a total of 6 remaining free balls.
The weight of this edge is thus $t^3/[6]_{qt}$. 

We continue with the top row ball in column 10. It is matched
with the bottom row ball in column 5 by cycling around and skipping 2 free balls out of a total of 5 remaining free balls. The weight of this edge is thus $qt^2/[5]_{qt}$.

The next ball to be matched is the rightmost unrestricted unmatched top row ball, which is in column 6: this one is matched to the bottom row ball in column 9 by skipping 1 free ball out of a total of 4 remaining free balls. The weight of this edge is $t/[4]_{qt}$.

There are no remaining unmatched unrestricted top row balls, so therefore the weight of this two-line queue is
\[
\frac{q^2t^{10}x_2^2x_3x_4^2x_5x_6x_7x_8x_9x_{10}^2}{[7]_{qt}[6]_{qt}[5]_{qt}[4]_{qt}}.
\]

\begin{remark}
There is some recent work \cite{AasGrinbergS} that considers the usual multiline queues (at $t=0$) with $\{x_i\}$ weights as we have defined them here; in that paper, the authors call them \emph{multiline queues with spectral parameters}. 
\end{remark}

We will exhibit a construction that will prove:
\begin{thm}
There exists a bijection between CRTs of type $\mu\in\States(k,r,\ell)$
and two-line queues of type $\mu$. This bijection is weight preserving.
\end{thm}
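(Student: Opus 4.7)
The plan is to define a bijection at the level of CRTs equipped with arrow orderings: for each pair $(T, \{\sigma^i\})$ with $T \in \CRT(\mu)$, I will construct a unique two-line queue $Q$ of type $\mu$ with a specified matching, such that the $qtx$-weight of $(T, \{\sigma^i\})$ multiplied by the factor $\frac{[r+\ell-\arr(T)]_{qt}!}{[r+\ell]_{qt}!}$ equals the weight of $Q$. Summing over arrow orderings then yields the stated weight-preserving bijection between CRTs and two-line queues. In the construction, the bottom row of $Q$ is read off from $\mu$, and the top row has a ball at column $c$ precisely when $c$ is (i) an empty square column of $T$, (ii) the label of a row of $T$ containing at least one arrow, or (iii) the column-label of a non-maximally labeled arrow of some row. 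The matching is then built chain-by-chain, one chain per row of $T$ with arrows: for row $i$ with arrows labeled $1, 2, \ldots, a_i$ at columns $j_1, \ldots, j_{a_i}$, the chain consists of the edges $(\text{top } i \to \text{bot } j_1)$, $(\text{top } j_1 \to \text{bot } j_2), \ldots, (\text{top } j_{a_i-1} \to \text{bot } j_{a_i})$, while the remaining top balls (those at empty square columns) are auto-matched in column as restricted balls.

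The inverse map recovers $(T, \{\sigma^i\})$ from $Q$ by reading off the matching edges: each in-column matched pair corresponds to an empty square column of $T$, and each non-trivial chain starting at a $0$-column $i$ produces the arrows of row $i$ labeled $1, 2, \ldots$ in the order traversed by the chain. That these maps are mutually inverse then follows from a direct check of the placement rules and the chain structure.

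For weight preservation, I plan to verify four correspondences. First, the $x$-monomials agree: the factor $\wt_{\EA}(\mu)$ matches the bottom-row-ball product, and the top-row-ball columns of $Q$ are exactly the columns contributing to $\prod_a \wt(a)\prod_c \wt(c)$ on the CRT side. Second, $\cyc(T,\{\sigma^i\})$ equals the number of chain edges of $Q$ that wrap around the cylinder, since each recoil of $\tsigma^i$ is exactly a chain edge that wraps. Third, $\dis(T,\{\sigma^i\})$ equals the total number of free bottom balls skipped during the matching algorithm, since the $*$'s and larger-indexed entries counted by $\dist_k(\tsigma^i)$ are in bijection with the free bottom balls skipped during the $k$-th chain step in row $i$. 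Fourth, the denominator $\frac{[r+\ell-\arr(T)]_{qt}!}{[r+\ell]_{qt}!}$ matches the product $\prod_{\text{edges}}\frac{1}{[\text{free}]_{qt}}$, since the free bottom-ball count begins at $r+\ell$ and decreases by exactly one per chain edge.

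The main obstacle will be the careful bookkeeping behind the disorder-vs-skipped correspondence, particularly handling the orientation flip between the right-to-left reading of each row into $\tsigma^i$ (with the vertical path marker $0$ as the starting reference) and the left-to-right, right-wrapping traversal of the queue's matching algorithm. The rhombic tiles, which contribute $*$'s without corresponding to any bottom $2$, must be accounted for via the observation that they correspond to $1$-positions of $\mu$ --- i.e., to the $r$ unmatched bottom balls, which remain free throughout the algorithm and so contribute to ``skipped'' on every chain pass that crosses them. A careful induction on the chain step within each row should render this correspondence transparent, at which point both bijectivity and weight preservation follow.
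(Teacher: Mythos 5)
Your construction coincides with the paper's: the same top-row placement rule (balls at empty square columns, at labels of nonempty rows, and at columns of non-maximal arrows), the same chain-by-chain matching following the arrow labels within each row, and the same inverse; the paper itself leaves the weight check as ``a simple exercise,'' while your four-part outline (the $x$-monomial match, recoils versus wrapping edges, disorder versus skipped free balls, and the $[\,\cdot\,]_{qt}$-factorial denominators) is a correct and somewhat more explicit account of that exercise, including the key point that the queue's rightmost-first processing order corresponds to reading the CRT's rows from bottom to top. So this is essentially the paper's proof, stated at the level of CRTs with arrow orderings, which is in fact the level at which the paper's bijection implicitly operates.
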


%
%

\begin{figure}[!ht]
  \centerline{\includegraphics[width=2in]{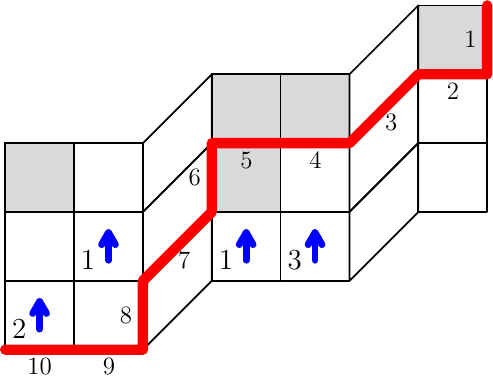}}
\centering
 \caption{A CRT of type $\mu=0212201022$ that corresponds to the two-line queue of \cref{exampl}.}
\label{CRT}
 \end{figure}

\begin{proof}
We present the bijection. Given a CRT $T$ of type $\mu\in\States(k,r,\ell)$, we label the rows and columns of $T$ by the label of the corresponding edge in $P(\mu)$. We build a queue $Q$ from $T$. We first fill the two rows of the queue with the following rules. For each $i$, the site in column $i$ of the bottom row is empty if and only if $\mu_i=0$; otherwise it contains a ball. 
The site in column $i$ of the top row is empty if and only if one of the following occurs: 
\begin{itemize}
\item $\mu_i=1$, 
\item $\mu_i=0$ and the row $i$ of $T$ is empty, or 
\item $\mu_i=2$ and the column $i$ of $T$ contains an arrow which has the largest label in its row.
\end{itemize}

We now explain how to match the balls in $Q$. 
\begin{itemize}
\item A restricted top row ball in column $i$ is matched to the bottom row ball in column $i$ if and only if the column $i$ of $T$ is empty.
\item An unrestricted top row ball in column $i$ is matched to the the bottom row ball in column $j$ if and only if there exists in $T$ an arrow labelled $1$ in row $i$ and column $j$.
\item A restricted top row ball in column $i$ is matched to the bottom row ball in column $j$ where $i\neq j$ if and only if there exists in $T$ an arrow labelled with some $k>1$ in column $j$, and in the same row there is an arrow
labelled $k-1$ in column $i$.
\end{itemize}

It is a simple exercise to check that the weight of the $Q$ is equal to the weight of $T$, and the construction
is bijective.
\end{proof}

{\bf Example of the bijection.} We start with the CRT $T$ in \cref{CRT}, where we have labeled the edges of $P(\mu)$ from 1 to 10 to correspond with the labels of the columns of the two-line queue $Q$. 
We first fill the bottom row of $Q$ by putting balls in all sites except for those in columns 1, 6, and 8, since those correspond to the vertical edge labels in $T$. 

Now we fill the top row of $Q$. We put an empty site in column 1, as row 1 of $T$ is empty.
The edges 3 and 6 or $P(\mu)$ are diagonal, so we put an empty site in columns 3 and 6 of $Q$.
Finally the columns 5 and 9 of $T$ contain an arrow with the largest label in its corresponding row, and therefore we put an empty site in columns 5 and 9 of $Q$. The rest of the sites are filled with balls.

We proceed to match balls between the two rows of $Q$, starting with the empty columns of $T$. Column 2 of $T$ is empty, so the top row ball in column 2 is matched to the bottom row ball in column 2. Now we look at the non-empty rows of $T$ from bottom to top.

In row 8 and column 4 of $T$, there is an arrow labelled 1: therefore the top row ball in column 8 or $Q$ is matched with the bottom row ball in column 4.

In row 8 and column 10 of $T$, there is an arrow labelled 2: therefore the top row ball in column 4 of $Q$ is matched to the bottom row ball in column 10.

In row 8 and column 5 of $T$, there is an arrow labelled 3: therefore the top row ball  in column 10 of $Q$ is matched to the bottom row ball in column 5.

We now look at row 6 of $T$. In column 9, there is an arrow labeled 1: therefore the top row ball in column 6 of $Q$ is matched to the bottom row ball in column 9.

Having recorded all the arrows in $T$, we get the two-line queue of \cref{exampl}.

\bibliographystyle{alpha}
\bibliography{bibliography}

\end{document}